\newcommand{\ba}{\overline{\alpha}}
\date{July 19, 2023 (v2)}
\newcommand{\cL}{\mathcal L}
\newcommand{\cM}{\mathcal M}
\newcommand{\bZ}{\mathbb Z}
\newcommand{\bQ}{\mathbb Q}
\newcommand{\bC}{\mathbb C}
\newcommand{\bR}{\mathbb R}
\newcommand{\CP}{\bC P}
\newcommand{\vv}{\, | \,}
\newcommand{\iso}{\approx}
\DeclareMathOperator{\Sign}{Sign}
\DeclareMathOperator{\ad}{ad}
\DeclareMathOperator{\Hom}{Hom}
\DeclareMathOperator{\Ind}{Ind}
\DeclareMathOperator{\Image}{Im}
\DeclareMathOperator{\ch}{ch}
\DeclareMathOperator{\csch}{csch}
\DeclareMathOperator{\tr}{tr}
\newtheorem{dummy}{anything}[section]
\newtheorem{theorem}[dummy]{Theorem}
\newtheorem*{thma}{Theorem A}
\newtheorem{lemma}[dummy]{Lemma}
\theoremstyle{definition}%%Change Theoremstyle
\newtheorem{definition}[dummy]{Definition}
  \newtheorem{example}[dummy]{Example}
  \newtheorem{remark}[dummy]{Remark}
\newcommand
{\eqncount}{\setcounter{equation}{\value{dummy}}%
\addtocounter{dummy}{1}}
\numberwithin{equation}{section}
\newcommand{\la}{\langle}
\newcommand{\ra}{\rangle}
\newcommand{\bd}{\partial}
\newcommand{\cy}[1]{\bZ/{#1}}
\newcommand{\nr}[1]{\medskip\noindent{{\bf #1}.}}
\newcommand{\parni}{\medskip\noindent}
\begin{document}

\title{Finite group actions on $4$-manifolds and equivariant bundles}
\author{Nima Anvari}
\address{Department of Mathematics \& Statistics, McMaster University
L8S 4K1, Hamilton, Ontario, Canada}
\email{anvarin@math.mcmaster.ca}

\author{Ian Hambleton}
\address{Department of Mathematics \& Statistics, McMaster University
L8S 4K1, Hamilton, Ontario, Canada}
\email{hambleton@mcmaster.ca}

\thanks{This research was partially supported by NSERC Discovery Grant A4000.}

\begin{abstract}
Given a $4$-manifold with a homologically trivial and locally-linear cyclic group action, we obtain necessary and sufficient conditions for the existence of equivariant bundles.  The conditions are derived from the twisted signature formula and are in the form of congruence relations between the fixed point data and the isotropy representations. 
\end{abstract}

\subjclass[2020]{{57M60, 58J20}}
\maketitle
%%%%%%%%%%%%%%%%%%%%%%%%%%%%%%

\section{Introduction}\label{sec:intro}
Finite group actions on $4$-manifolds can be studied in various settings. We are  mainly interested in comparing smooth actions with those which are topological and locally linear, but important examples arise for symplectic $4$-manifolds and complex surfaces. Here is a sampling of survey articles and recent work on aspects of this general theme: \cite{Anvari:2016a,Anvari:2016,Anvari:2021,Baraglia:2023,Braam:1993,Chen:2010,Edmonds:2018,Furuta:1989,Hambleton:1989,Hambleton:1992,Hambleton:1995,Hambleton:2022,Kwasik:1993,Ruberman:1995,Wilczynski:1987}. We will focus on the existence and classification of equivariant bundles, and their applications in Yang-Mills gauge theory to the study of finite group actions.

\medskip
We begin by recalling some standard definitions.
Let $(X,\pi)$ denotean oriented, simply-connected, closed $4$-manifold $X$  together with a locally linear and homologically trivial action of a cyclic group $\pi=\bZ/p$ of prime order. The fixed point set $X^{\pi}$  has Euler characteristic $\chi(X^{\pi}) = b_2(X) + 2 $, by the Lefschetz fixed point formula. In general $X^{\pi}$ will consist of isolated fixed points and a disjoint union of fixed $2$-spheres (see \cite[\S 2]{Edmonds:1989}).

\begin{definition}
At each isolated fixed point $x \in X^{\pi}$, the tangent space admits an equivariant decomposition $(T_{x_i}X,\pi)=\bC(a_i)\oplus\bC(b_i)$  of complex representation spaces. Let  
$$t\cdot (z_1,z_2)=(\zeta^{a_i}z_1,\zeta^{b_i}z_2)$$
denote the action for $t$ a fixed generator in the cyclic group $\pi$, $\zeta=e^{2\pi i/p}$, and with  integers $(a_i,b_i)$, both non-zero modulo $p$. 
\begin{enumerate}
\item The integers $(a_i, b_i)$ are the local tangential \emph{rotation data},  and are well-defined up to order and simultaneous change in sign. 

\item Similarly, for each point $x$ on a $\pi$-fixed $2$-sphere $F_j$,  there is a representation $\bC\oplus \bC(c_j)$ corresponding to the equivariant splitting $$TX\vv_{F_j}=T F_j \oplus N(F_j)$$
 where $N(F_j)$ is the normal bundle with rotation $\zeta^{c_j}$, and $c_j \not\equiv 0 \pmod{p}$.
 
 \item The total \emph{fixed point rotation data} is the collection 
 $$\mathcal{F}=\{(a_i,b_i),(c_j,\alpha_j)\vv i \in I, j \in J \}.$$  where $I$ and $J$ index the isolated fixed points and $2$-spheres respectively and $\alpha_j=[F_j]\cdot[F_j]$ is the  self-intersection number of the fixed spheres $[F_j] \in H_2(X;\bZ)$.

 \item By an equivariant line bundle $(L,\pi)\rightarrow (X,\pi)$ we mean a principal $U(1)$-bundle $L$ over $X$ together with a lift of the $\pi$-action to the total space. Given such a lift, there exists a set of \emph{isotropy representations} of the $\pi$-action on each fiber over the fixed point set which we denote by  $L\vv_{x_i}=t^{\lambda_i}$ over isolated fixed points and $L\vv_{F_j}=t^{\lambda_j}$ over a fixed $2$-sphere. Denote the collection of these isotropy representations by $\mathcal{I}=\{\lambda_i,\lambda_j \vv i \in I, j \in J\}$. 
 \end{enumerate}
\end{definition}

With this notation we have our main result.

\begin{thma}
Let $(X,\pi)$ denote an oriented, simply-connected, closed $4$-manifold with a locally linear, homologically trivial action of a cyclic group $\pi=\bZ/p$ of odd prime order $p$, with fixed-point rotation data $\mathcal{F}=\{(a_i,b_i),(c_j,\alpha_j)\vv i \in I, j \in J \}$. 

A collection 
$\mathcal{I}$ of integers $\{\lambda_i,\lambda_j \vv i \in I, j \in J\}$ can be realized  (modulo $p$) as the isotropy representations of an equivariant line bundle $(L, \pi) \rightarrow (X, \pi)$ if and only if there is a collection of integers $\{m_j \vv j \in J\}$ such that
\begin{equation}\label{eq:thma}
\sum_{i\in I} \dfrac{\lambda_i}{a_ib_i}+\sum_{j \in J} \dfrac{c_j m_j-\lambda_j \alpha_j}{c^2_j} \equiv 0 \pmod p.
\end{equation}
When such a line bundle exists, the integers $m_j  = c_1(i^{\ast}L)[F_j]$ satisfy equation \textup{\eqref{eq:thma}}.
\end{thma}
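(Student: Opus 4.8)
The plan is to read both directions off the Atiyah--Singer--Segal $G$-signature theorem applied to the signature operator of $X$ twisted by the complex line bundle $L$. Writing $t$ for the chosen generator and $\zeta=e^{2\pi i/p}$, the equivariant signature $\Sign(t,X,L)$ localizes as a sum of contributions over $X^{\pi}$. At an isolated fixed point with rotation data $(a_i,b_i)$ and isotropy $\lambda_i$ the local term is (up to the standard orientation sign) $\zeta^{\lambda_i}\,\dfrac{(\zeta^{a_i}+1)(\zeta^{b_i}+1)}{(\zeta^{a_i}-1)(\zeta^{b_i}-1)}$, while a fixed sphere $F_j$ contributes $\zeta^{\lambda_j}\bigl(\nu_0(c_j)\,m_j+\nu_1(c_j)\,\alpha_j\bigr)$, where $m_j=c_1(i^{\ast}L)[F_j]=\int_{F_j}c_1(L)$ enters through the equivariant Chern character $\ch_t(L)=\zeta^{\lambda_j}e^{c_1(L)}$ and $\nu_1(c_j)=-\csc^2(\pi c_j/p)$ is the usual self-intersection coefficient. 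It is essential here to twist by $L$ itself rather than by the real bundle $L\oplus\bar L$: the latter would replace $\zeta^{\lambda}$ by $\zeta^{\lambda}+\zeta^{-\lambda}$, killing the term linear in $\lambda$ that the theorem detects. Since the action is homologically trivial, $\Sign(t,X,L)$ is a virtual character evaluated at $t$, hence an algebraic integer in $\bZ[\zeta]$; this integrality is the only input I need from the left-hand side.

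For necessity I would work in the localization of $\bZ[\zeta]$ at the prime $\mathfrak p=(1-\zeta)$ over $p$, which has residue field $\bZ/p$ and satisfies $(p)=\mathfrak p^{\,p-1}$. Setting $s=1-\zeta$, the elementary congruences
\[
\zeta^{a}-1\equiv -a\,s,\qquad \zeta^{a}+1\equiv 2,\qquad \zeta^{\lambda}\equiv 1-\lambda\,s \pmod{s^{2}}
\]
let me expand each local contribution as a Laurent expression in $s$. The coefficient of $s^{-2}$ is independent of the $\lambda$'s and the $m_j$'s and vanishes on account of the untwisted ($L=\underline{\bC}$) formula, which also forces the $\lambda$-independent part of the $s^{-1}$ coefficient to vanish. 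Because $\Sign(t,X,L)\in\bZ[\zeta]$ has non-negative $\mathfrak p$-adic valuation, the remaining, twist-dependent coefficient of $s^{-1}$ must reduce to $0$ in $\bZ/p$. Collecting the contributions $\lambda_i/(a_ib_i)$ from the isolated points and the $m_j/c_j$ and $\lambda_j\alpha_j/c_j^{2}$ terms from the spheres, and dividing by the resulting common unit, gives precisely congruence \eqref{eq:thma}, with $m_j=c_1(i^{\ast}L)[F_j]$ read off from $\ch_t(L)$.

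For sufficiency I would reverse the construction. Equivariant line bundles form a group under tensor product, under which isotropy representations and Chern numbers add, so the collection $\mathcal R$ of realizable data $(\lambda_i,\lambda_j,m_j)$ is a subgroup of $(\bZ/p)^{I}\times(\bZ/p)^{J}\times\bZ^{J}$, contained by necessity in the solution set of \eqref{eq:thma}. It then remains to produce enough bundles to fill that set. First, since $X$ is simply connected the action fixes every class in $H^2(X;\bZ)=\mathrm{Pic}(X)$, and because $H^2(\pi;U(1))=0$ every line bundle admits an equivariant structure, the lifts forming a torsor over $\Hom(\pi,U(1))=\bZ/p$; twisting the lift by a character shifts all the $\lambda$'s by a common constant. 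Second, invariant divisors, in particular the classes $\mathrm{PD}[F_k]$ of the fixed spheres, realize prescribed values of the $m_j$ together with the correlated shifts of the $\lambda_j$. A localization (equivariant Euler class) computation relates these restriction data back to the same cotangent/cosecant expressions, and a rank count then shows that the constructed bundles span exactly the affine subspace of $(\bZ/p)^{I\sqcup J}$ cut out by \eqref{eq:thma}.

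The main obstacle I expect is twofold. On the analytic side it is pinning down the degree-zero spherical coefficient $\nu_0(c_j)$ and confirming that its $\mathfrak p$-adic leading term produces exactly the numerator $c_jm_j-\lambda_j\alpha_j$ over $c_j^{2}$; the self-intersection coefficient $\nu_1$ is standard, but the relative normalization of $\nu_0$ (and all orientation signs) must be fixed with care. On the topological side the genuine crux is sufficiency, where one must show that the realizable data \emph{exhaust} the solutions of \eqref{eq:thma} rather than merely inject into them; I expect this to require an explicit description of the image of the fixed-point-data homomorphism, obtained either by a spanning family of equivariant bundles near the fixed spheres or by a direct computation in equivariant cohomology $H^{2}_{\pi}(X;\bZ)$.
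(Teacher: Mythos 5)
Your necessity argument is essentially the paper's own. Expanding the twisted $G$-signature formula $(1-\zeta)$-adically after multiplying by $(t-1)^2$, using integrality of the virtual character and the untwisted relation to kill the $\lambda$-independent coefficients, is exactly the computation that produces Theorem \ref{thm:fiveone}(i), which is \eqref{eq:thma} with $m_j=c_1(i^{\ast}L)[F_j]$; your valuation bookkeeping at $\mathfrak{p}=(1-\zeta)$ is a legitimate reformulation of the Taylor-expansion-mod-$p$ method of Sections \ref{sec:three}--\ref{sec:four}, and the normalization worries you flag there are real but routine.

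The genuine gap is sufficiency, and you effectively concede it: your final paragraph says the crux is an ``explicit description of the image of the fixed-point-data homomorphism,'' which is precisely the content of the paper's proof and is nowhere supplied in your sketch. Concretely: (a) ``invariant divisors'' are unavailable, since $(X,\pi)$ is only a locally linear topological action, so there is no holomorphic or smooth structure producing equivariant bundles dual to the fixed spheres; (b) while your claim that every line bundle admits \emph{some} equivariant lift is correct (for $\pi$ cyclic and $X$ simply connected the Borel spectral sequence obstructions land in $H^2(\pi;H^1(X))=0$ and $H^3(\pi;\bZ)=0$), such a lift realizes correlated, a priori unknown values of $(\lambda_i,\lambda_j,m_j)$ --- the isotropy weights at distinct fixed components are strongly constrained relative to one another, as in the linear models on $\CP^2$ where the three weights differ by the rotation numbers --- so the rank count over $\bZ/p$ that your family ``spans exactly the affine subspace cut out by \eqref{eq:thma}'' is exactly what must be proved, not asserted. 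The paper instead runs a Mayer--Vietoris argument in Borel cohomology for $X=X_0\cup N$: after reducing to the case of at least one isolated fixed point by equivariant connected sum (Lemma \ref{lem:fivetwo}), equivariant bundles are classified by $\theta(L)\in H^2_\pi(X;\bZ)$ (Lashof--May--Segal), and the sole obstruction to gluing prescribed bundles on $N$ to a bundle on the free part is the image under $\delta\colon H^2(\bd W_0)\to H^3(W_0,\bd W_0)$ of a sum of first Chern classes on the quotient lens spaces $Y_i=L(p;a_i,b_i)$ and $Y_j=L(p\alpha_j;c_j,c_j)$. Pinning down those classes needs the linking-form/holonomy identification $c_1=\frac{\lambda}{ab}\hat\mu$ (Lemma \ref{lem:fivethree} and the lemma following it), the covering-space splitting $H^2(Y_j)\cong\cy{p^{a_j+1}}\oplus\cy{\ba_j}$ when $p\mid\alpha_j$ (Lemma \ref{lem:fivefour}) with the congruences $\ell_j\equiv-\lambda_j\alpha_j \pmod{p^{a_j+1}}$ and $\ell_j\equiv c_jm_j\pmod{\ba_j}$ (including the orientation sign on $-\lambda_j\alpha_j$), and the fact that $H_1(X_0)$ has order prime to $p$ (Lemma \ref{lem:fivesix}). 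None of these mechanisms --- in particular the $p$-divisibility of the $\alpha_j$, which your sketch never confronts --- appears in your outline, so as written the proposal establishes only the ``only if'' half of Theorem A.
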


\begin{remark}
A special case of this result can be found in  \cite[Proposition 1.4]{Hambleton:1989}, for a cyclic group $\pi$ of odd order acting locally linearly and semi-freely on the complex projective plane $\bC P^2$ which has three isolated fixed points. The necessary condition \eqref{eq:thma} in Theorem A is established by extending the methods of
\cite[\S 2]{Hambleton:1989} to more general actions. 
 \end{remark}

\begin{remark} The definitions above generalize directly to actions $(X, \pi)$ where $\pi$ is any finite group, and the structural group $G$ of the principal bundle is any compact Lie group.  The details will be left to the reader (see also \cite{Hambleton:2003,Hambleton:2010}).
\end{remark}

For applications in gauge theory, $G = SU(2)$ is an important example. In Theorem \ref{thm:sevenone} we obtain the analogous necessary conditions for the existence of equivariant $SU(2)$-bundles, and discuss the standard examples on $X = S^4$ and $X = \overline{\bC P}^2$ . In Section \ref{sec:seven} we apply our methods to compute the formal dimensions of fixed strata equivariant $SU(2)$-moduli spaces (see \cite{Hambleton:1992}), and work out one special case.

\section{Some motivating questions}\label{sec:one}
Here are some questions related to the general theme (all actions will be assumed to preserve orientation). More information about some of these directions can be found in the references.

\nr{1} 
Does there exist a smooth $\cy p$-action on a (homotopy) K3 surface, which induces the identity on integral homology ?

\medskip
This is a well-known question of Allan Edmonds. Note that results of Edmonds and Ewing \cite{Edmonds:1992}  imply that topological locally linear examples exist for odd primes.

\nr{2}  Does there exist a smooth $\cy p$-action on a homotopy K3 surface, which contains an invariant embedded Brieskorn homology $3$-sphere $\Sigma(2,3,7)$ ?

\medskip
Fintushel and Stern \cite{Fintushel:1991} showed that many homotopy K3 surfaces admit embeddings of  $\Sigma(2,3,7)$. The question concerns the possible existence of an equivariant splitting of the K3 surface along the Brieskorn sphere. 

\nr{3} A Brieskorn homology $3$-sphere $\Sigma(a,b,c)$ admits a free $\cy p$-action if $p \nmid abc$. Does there exist a smooth, homologically trivial extension of this action with isolated fixed points to any  smooth simply connected  negative definite $4$-manifold $X$ with boundary $\Sigma(a,b,c)$ ?

\medskip
Anvari \cite{Anvari:2016a} proved that the free $\cy 7$ on $\Sigma (2,3,5)$ does not extend in this way over the minimal negative definite $4$-manifold obtained by resolving the link singularity. The method of proof involves studying equivariant Yang-Mills gauge theory on the non-compact manifold with cylindrical end obtained from $X$ by attaching the end $\Sigma(2,3,5) \times [0,\infty)$. Information about the Floer homology of Brieskorn spheres is an essential ingredient in tackling this problem (see Saveliev \cite{Saveliev:1999a,Saveliev:2000,Saveliev:2002b}).

 The corresponding extension problem from  $\bZ/p$ actions on Brieskorn spheres to smooth contractible $4$-manifolds was studied in \cite{Anvari:2016}, \cite{Anvari:2021}  where the orientation in the instanton moduli space played a key role (see also \cite{Kwasik:1993}).

\nr{4} What sets of rotation numbers can be realized by a smooth, pseudo-free $\cy n$-action  on $X = S^2 \times S^2$ ?

\medskip
A pseudo-free action is one with isolated singular points.  If the action is semi-free, there are ``standard models" with rotation data $\{(a, b), (c, d),  (a, -b), (c, -d)\}$ at the four fixed points. This question for $X = \CP^2$ was answered in \cite{Edmonds:1989a,Hambleton:1992}, where it was shown that the rotation data is always the same as for the linear actions arising from $PGL_3(\bC)$.

\nr{5} Let $(X, \pi)$ denote a smooth action of a finite group $\pi$ on a closed, simply connected smooth $4$-manifold.  Under what conditions does there exist a $\pi$-equivariant  principal $G$-bundle over $X$ with prescribed Chern classes, for $G = U(1)$ or $G=SU(2)$ ?

\medskip
Some information about this question was provided in \cite{Hambleton:1989,Hambleton:1992} for $X = \CP^2$ and $\pi$ finite cyclic, or more generally for $X$ negative definite (see also \cite{Hambleton:2010} for the connection between Chern classes and the isotropy representations). For $X = S^4$, the existence and classification of such bundles was applied by Austin \cite{Austin:1990} and Furuta \cite{Furuta:1989,Furuta:1990,Furuta:1990a}
 to study group actions via instanton gauge theory. The compactification of an equivariant version of Donaldson's Yang-Mills moduli space \cite{Donaldson:1990}, \cite{Hambleton:1992} involves ``bubbling" convergence to equivariant instantons over the $4$-sphere. Further applications of equivariant bundles arise in studying the equivariant compactification of moduli spaces over cylindrical end $4$-manifolds.

\section{The $G$-Signature Formula}\label{sec:two}

In this section we review the derivation of the $G$-signature formula as the index of the signature operator, see \cite[\S 18]{shanahan:1978}, \cite[pp.~585-586]{Atiyah:1968}.  The $G$-signature of a closed, smooth $4$-manifold $X$ with an orientation preserving action of a finite group $G$ acting as isometries on $X$ is defined as the virtual representation
]eqncount
\begin{equation}
\Sign(X,G) = [H_{+}^2(X;\bC)] - [H_{-}^2(X;\bC)]
\end{equation}
where $H_{\pm}^2(X;\bC)$ are the maximal positive/negative definite $G$-invariant subspaces of $H^2(X;\bC)$. Taking characters gives the $g$-signatures
\eqncount
\begin{equation}
\Sign(g,X) = \tr_g H_{+}^2(X) - \tr_g H_{-}^2(X)
\end{equation}
which by the $G$-signature formula can be computed in terms of certain Lefschetz numbers from the fixed point set.

Let $D:C^{\infty}(\Lambda^+) \rightarrow C^{\infty}(\Lambda^-)$ denote the signature operator. The Lefschetz numbers are computed as follows.  Let $X^g$ denote the fixed set of $g$, $n=\dim X^g$ and $N^g$ denotes the normal bundle of $X^g$ in $X$. Then the Lefschetz number is given by
\eqncount
\begin{equation}
L(g,D)=(-1)^{n(n+1)/2}\dfrac{\ch_g(\Lambda^+-\Lambda^-)(TX\vv_{X^g}\otimes \bC)Td(TX^g\otimes \bC)}{e(TX^g)\ch_g(\Lambda_{-1}N^g \otimes \bC)}[X^g]
\end{equation}

 where \begin{align*}
 &Td(TX^g \otimes \bC) = \dfrac{-x^2}{(1-e^{-x})(1-e^x)} \\
 &\ch(\Lambda^+ - \Lambda^-)(T^g \otimes \bC) = e^{-x} - e^x \\
 &\ch_g(\Lambda^+ - \Lambda^-)(N^g \otimes \bC) = e^{-y-i\theta} - e^{y+i\theta} \\
 & \ch_g(\Lambda_{-1}(N^g \otimes \bC)) = (1-e^{y+i\theta})(1 - e^{-y-i\theta}).
 \end{align*} 

 Here $x=e(TX^g)$ denotes the Euler class of the tangent bundle to the $2$-dimentional fixed point set, $\ch_g$ is the equivariant Chern character, and $\theta$ is the rotation on the normal bundle $N^g$ with Euler class $y$.  Note that $TX\vv_{X^g}=TX^g\oplus N^g$ and
\eqncount
\begin{equation}
\ch_g(\Lambda^+-\Lambda^-)(TX\vv_{X^g}\otimes \bC)=\ch_g(\Lambda^+-\Lambda^-)(TX^g \otimes \bC)\ch_g(\Lambda^+-\Lambda^-)(N^g \otimes \bC).
\end{equation}

 Substituting these expressions into the Lefschetz number and letting $F$ denote a $2$-dimensional fixed set component, then the contribution to the $g$-signature is given by
\begin{align*}
L(g,D)\vv_{F}&=(-1)\dfrac{(e^{-x}-e^x)(e^{-y-i\theta}-e^{y+i\theta})}{x(1-e^{y+i\theta})(1-e^{-y-i\theta})}\dfrac{x(-x)}{(1-e^{-x})(1-e^x)} [F]\\
&=\coth\left(\frac{y+i\theta}{2}\right)x\coth\left(\frac{x}{2}\right)[F]
\end{align*}
where the following trigonometric identity is used
\eqncount
\begin{equation}
\coth\left(\frac{x}{2}\right)=\frac{e^{-x}-e^x}{(1-e^{-x})(1-e^x)}.
\end{equation}
To evaluate on $[F]$, we use the Taylor expansions 

\begin{align*}
& x\coth\left(x/2\right)=2+\dfrac{x^2}{6\hphantom{i}}+\cdots \\
& \coth\left(\frac{y+i \theta}{2}\right)=\coth\left(i \theta/2\right)-\frac{1}{2}\csch^2\left(\frac{i \theta}{2}\right)y
\end{align*}
Thus the contribution to the Lefschetz number is given by 
\begin{align*}
L(g,D)\vv_{F}&=\{ 2 \coth(i\theta/2)- \csch^2(i \theta/2)y  \}[F]\\
&=-\csch^2(i\theta/2)[F]^2=\csc^2(\theta/2)[F]^2\\
&=\frac{-4t^{c_F}}{(t^{c_F}-1)^2}[F]^2
\end{align*}
where $\theta=\frac{2\pi c_F}{p}$ and $c_F$ is the rotation number on the normal fiber of $F$ and $t=e^{2\pi i/p}$ is a primitive $p$th root of unity. Similarly we can compute the contribution from isolated fixed points:
\begin{align*}
L(g,D)\vv_{pt}&=\frac{(e^{-i\theta_1}-e^{i\theta_1})(e^{-i\theta_2}-e^{i\theta_2})}{(1-e^{i\theta_1})(1-e^{-i\theta_1})(1-e^{i\theta_2})(1-e^{-i\theta_2})}\\
&=\coth(i\theta_1/2)\coth(i\theta_2/2)\\
&=-\cot(\theta_1/2)\cot(\theta_2/2)\\
&=\frac{(t^a+1)(t^b+1)}{(t^a-1)(t^b-1)}
\end{align*}
where $\theta_1=\frac{2\pi a}{p}$ and $\theta_2=\frac{2\pi b}{p}$ are the rotation numbers at the fixed point. By summing over the fixed point set components, the $G$-signature formula is given by 
\eqncount
\begin{equation}
\Sign(g, X)= \sum_i \dfrac{(t^{a_i}+1)}{(t^{a_i}-1)} \dfrac{(t^{b_i}+1)}{(t^{b_i}-1)}+\sum_j \dfrac{-4\alpha_j t^{c_j}}{(t^{c_j}-1)^2}
\end{equation}
where  $\alpha_j$ denotes the self-intersection $[F_j] \cdot[F_j]$ of the fixed $2$-spheres $\{F_j\}$.  Note the equivariant signature on the left-hand side of this formula is equal to the usual signature in this case because the action is homologically trivial.  We also remark here that this formula can be viewed as an equation in the cyclotomic field $\bQ[\zeta] = \bQ[t]/\Phi_p(t)$ where $\Phi_p(t)$ is the cyclotomic polynomial $ 1+t + t^2 + \cdots + t^{p-1}$.

\begin{remark} The $G$-signature formula (and its generalizations to the twisted $G$-signature formlas used in Section \ref{sec:four} and  Section \ref{sec:six} also hold in the topological locally linear category. The argument uses bordism theory  (see Wall \cite[Chapter 14B]{Wall:1999},  \cite{Gordon:1986}) and the foundational work of  Freedman \cite{Freedman:1982,Freedman:1984} 
(see also \cite[Theorem 1.1]{Kwasik:2017}). For the twisted $G$-signature formula \cite[Theorem 6.7]{Atiyah:1968a}, the analytic index and the topological index coincide as homomorphisms 
$K_G(TX) \to R(G)$. If $G$ is a finite group of odd order, Madsen \cite[Theorem 3.3 and Proposition 3.6]{Madsen:1986} relates equivariant $K_G$-theory to equivariant bordism after localizing away from 2.
\end{remark}

\section{Congruence Relations for the $G$-Signature Formula}\label{sec:three}

In this section we derive congruence relations satisfied by the rotation data $\mathcal{F}$ using the $G$-signature formula. We first note that since $(t^a - 1)/(t-1)$ is a unit in ring of cyclotomic integers $\bZ[\zeta]$ when $(a,p)=1$, multiplying both sides of the $G$-signature formula by $(t-1)^2$ induces an equation in the ring $R = \bZ[\zeta]$. The $I$-adic expansion of the resulting right-hand side leads to congruence relations relating the rotation data, where $I$ denote the ideal generated by $(t-1)$ in $R$.

 Following the method of \cite{Hambleton:1989} we lift the equation to $\bZ[t]$, compute the Taylor expansions about $t=1$ and reduce the coefficients modulo $p$. Since the indeterminacy of the coefficients are determined from the expansion of the cyclotomic polynomial $\Phi_p(t)$ (for which $p$ divides the coefficients of its Taylor expansion about $t=1$ up to order $p-1$) we obtain valid congruence relations by equating coefficients modulo $p$ up to order $p-1$.

The expansion arising from contributions from isolated fixed points are given by
\begin{align*}
\dfrac{(t^{a}+1)}{(t^{a}-1)} \dfrac{(t^{b}+1)}{(t^{b}-1)}(t-1)^2&=\frac{4}{ab}+\frac{4}{ab}(t-1)+\frac{1}{3}\left(\frac{a^2+b^2+1}{ab}\right)(t-1)^2\\
&-\frac{1}{180}\left(\frac{a^4+b^4-5a^2b^2+3}{ab}\right)(t-1)^4+\cdots
\end{align*}
Similarly the expansion of the second term is given by expressions of the form
\begin{align*}
\dfrac{-4\alpha t^c}{(t^{c}-1)^2}(t-1)^2 &=\frac{-4\alpha}{c^2}+\frac{-4\alpha}{c^2}(t-1)+\frac{1}{3}\frac{\alpha(c^2-1)}{c^2}(t-1)^2\\
&-\frac{1}{60}\frac{\alpha(c-1)(1+c+c^2+c^3)}{c^2}(t-1)^4+\cdots.
\end{align*}
Equating both sides of the expansion $\pmod p$ from the resulting equation in $R$ we thus obtain the following congruence relations:

\begin{theorem}\cite[p.~625]{Hambleton:1989}
Let $(X,\pi)$ denote a simply connected, closed $4$-manifold with a homologically trivial, locally-linear group action of a finite cyclic group $\pi=\bZ/p$ of odd order. Then the following congruence relations hold
\smallskip
\begin{enumerate}
\item $\sum_i \frac{1}{a_ib_i} - \sum_j \frac{\alpha_j}{c_j^2} \equiv 0 \pmod p$\\
\item $\sum_i \frac{a_i^2+b_i^2}{a_ib_i} + \sum_j \alpha_j \equiv 3\Sign(X) \pmod p$ \\
\item $\sum_i \frac{a_i^4+b_i^4-5a_i^2b_i^2}{a_ib_i} + 3\sum_j \alpha_j c_j^2   \equiv 0 \pmod p$ \\
\item $\sum_i \frac{2a_i^6-7a_i^4b_i^2-7a_i^2b_i^4+2b_i^6}{a_ib_i} +10\sum_j\alpha_jc_j^4 \equiv 0 \pmod p$.
 \end{enumerate}
%\begin{align*}
%&(1) \sum_i \frac{1}{a_ib_i} - \sum_j \frac{\alpha_j}{c_j^2} \equiv 0 \pmod p \\
%&(2) \sum_i \frac{a_i^2+b_i^2}{a_ib_i} + \sum_j \alpha_j \equiv 3\Sign(X) \pmod p \\
%&(3) \sum_i \frac{a_i^4+b_i^4-5a_i^2b_i^2}{a_ib_i} + 3\sum_j \alpha_j c_j^2   \equiv 0 \pmod p \\
%&(4) \sum_i \frac{2a_i^6-7a_i^4b_i^2-7a_i^2b_i^4+2b_i^6}{a_ib_i} +10\sum_j\alpha_jc_j^4 \equiv 0 \pmod p 
% \end{align*}
 Higher-order relations are valid up to and including terms of order $p-1$.
\end{theorem}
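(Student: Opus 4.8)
The plan is to start from the $G$-signature formula derived in Section~\ref{sec:two} and convert it into congruences by a controlled $I$-adic reduction, where $I=(t-1)$. Because the action is homologically trivial, the left-hand side is the constant integer $\Sign(X)$. The first step is to clear denominators: each $u_a := (t^a-1)/(t-1)$ is a unit in $R=\bZ[\zeta]$ when $(a,p)=1$, so multiplying the formula through by $(t-1)^2$ removes the poles at $t=1$ and yields an honest identity in $R$,
\[
\Sign(X)\,(t-1)^2 \;=\; \sum_{i} \frac{(t^{a_i}+1)(t^{b_i}+1)}{u_{a_i}u_{b_i}} \;+\; \sum_{j} \frac{-4\alpha_j t^{c_j}}{u_{c_j}^{\,2}},
\]
each summand now lying in $R$ rather than merely in its field of fractions.

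The heart of the argument is to extract congruences for the integer coordinates of this identity in the $\bZ$-basis $\{1,(t-1),\dots,(t-1)^{p-2}\}$ of $R$. I would lift the identity to $\bZ[t]$ and compare the Taylor coefficients at $t=1$ of the two sides modulo $p$, using the expansions recorded just before the statement. The one thing that must be checked is that this comparison is legitimate, i.e.\ that the ambiguity in the lift — addition of multiples of $\Phi_p(t)$ — is invisible mod $p$ in the relevant range. This is the load-bearing computation: from $\Phi_p(1+s)=\bigl((1+s)^p-1\bigr)/s=\sum_{j=0}^{p-1}\binom{p}{j+1}s^{\,j}$ together with $p\mid\binom{p}{k}$ for $1\le k\le p-1$, every Taylor coefficient of $\Phi_p$ of order $\le p-2$ is a multiple of $p$, hence so is every such coefficient of any $f(t)\Phi_p(t)$. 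Thus the coefficients of orders $0,\dots,p-2$ of the lifted equation are manifestly well-defined modulo $p$; the boundary order $p-1$ is also valid but requires the extra input that $(t-1)^{p-1}$ is an associate of $p$ in $R$ (from $p=\Phi_p(1)=\prod_{k=1}^{p-1}(1-\zeta^k)$). This is exactly the range in which the rational coefficients produced by the expansion stay $p$-integral, their denominators being Bernoulli-type divisors of $3,180,60,\dots$ which, by von Staudt--Clausen, $p$ avoids precisely until order $p-1$.

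With this in place the relations fall out by matching coefficients. The order-$0$ coefficient gives $\sum_i 4/(a_ib_i)-\sum_j 4\alpha_j/c_j^2\equiv 0$, and dividing by the unit $4$ yields (i). The order-$2$ coefficient gives $\tfrac13\sum_i(a_i^2+b_i^2+1)/(a_ib_i)+\tfrac13\sum_j\alpha_j(c_j^2-1)/c_j^2\equiv\Sign(X)$; clearing the $\tfrac13$ and removing the copy of (i) hidden in the $+1$ and $-1$ terms collapses this to (ii). Relations (iii) and (iv) emerge identically from orders $4$ and $6$ after the same simplification against previously established relations. The odd orders produce nothing new: the involution $t\mapsto 1/t$ sends each summand $P(t)$ to $P(t)/t^2$, so $P(t)/t$ is invariant and hence a function of $w=t+t^{-1}$; since $w-2=(t-1)^2/t$ begins in order $2$, the odd-order Taylor coefficients are forced to be $\bZ$-linear combinations of lower even-order ones, so only the even orders $0,2,4,6,\dots$ carry independent information.

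The main obstacle is the legitimacy of the mod-$p$ reduction rather than any single computation: one must be certain that passing from the identity in $R$ to coefficientwise congruences in $\bZ$ is justified, and this rests entirely on the $\Phi_p$-divisibility fact dovetailing with the $p$-integrality of the expansion coefficients up to order $p-1$. By contrast, the Taylor expansions of $\frac{(t^{a}+1)(t^{b}+1)}{(t^{a}-1)(t^{b}-1)}(t-1)^2$ and $\frac{-4\alpha t^{c}}{(t^{c}-1)^2}(t-1)^2$, and the cancellations against lower-order relations, are routine but error-prone, and I expect the bookkeeping of the rational coefficients to be where mistakes are likeliest to arise.
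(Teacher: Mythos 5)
Your proposal is correct and follows essentially the same route as the paper's Section 4: multiply the $G$-signature formula by $(t-1)^2$ using the fact that $(t^a-1)/(t-1)$ is a unit in $\bZ[\zeta]$, lift to $\bZ[t]$, Taylor expand about $t=1$, and reduce coefficients mod $p$, with the indeterminacy controlled by the divisibility $p\mid\binom{p}{k}$ in the expansion of $\Phi_p(t)$. Your added justifications (the explicit $\Phi_p(1+s)$ computation, the $t\mapsto 1/t$ symmetry explaining why odd orders are redundant, and the von Staudt--Clausen remark on $p$-integrality) are elaborations of steps the paper leaves implicit, not a different method.
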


\begin{example}[Linear models on $\bC P^2$]\label{ex:fourtwo}
Let $G=\bZ/p$ with odd prime $p$ act linearly on $X=\bC P^2$ by $t \cdot [z_1:z_2:z_3]=[\zeta^az_1:z_2:z_3]$ for $0< a<p$. The fixed point set consists of one isolated fixed point $[1:0:0]$ with tangential rotation number $(a,a)$ and a fixed projective line $F=\{ [z_1:z_2:z_3] \vv z_1 = 0\}$ with self-intersection $+1$ and a rotation of $c_F \equiv a \pmod p$ on the normal bundle. Then it is easy to check  that the congruence relations are satisfied. Similarly, in the case when the action is given by $t \cdot [z_1:z_2:z_3]=[\zeta^az_1:\zeta^bz_2:z_3]$ for $0<a<b<p$ the action consists of three isolated fixed points $[0:0:1], [1:0:0], [0:1:0]$ with rotation numbers $(a,b), (b-a, -a), (a-b, -b)$ and with some algebra it can be checked that the congruence relations hold. Additional examples can be obtained  for $\# _n\bC P^2 $ by equivariant connected sums along fixed point sets using the linear models.
\end{example}

\section{Equivariant Line Bundles}\label{sec:four}

Let $(X,\pi)$ denote a simply connected, closed $4$-manifold with a homologically trivial action of a finite cyclic group $\pi=\bZ/p$ of odd prime $p$ and $L\rightarrow X$ an equivariant line bundle.  The index of the signature operator twisted by an equivariant line bundle is the same as in the previous section, except that one has to consider the contribution of the equivariant Chern character of the line bundle restricted to the fixed point set. Let $F$ denote a fixed surface as before, then the contribution to the Lefschetz number is given by: 
\begin{align*}
L(g,D)\vv_{F}&=\{ 2 \coth(i\theta/2)- \csch^2(i \theta/2)y \}\{ \ch_g(i^{\ast}L) \}[F]\\
&=\{ 2 \coth(i\theta/2)- \csch^2(i \theta/2)y \}\{ e^{z+i\phi} \} [F]\\
&=\{ 2 \coth(i\theta/2)- \csch^2(i \theta/2)y \}\{ e^{i\phi}+e^{i\phi}z  \}[F]\\
&=\{ 2 \coth(i\theta/2)e^{i\phi}z - \csch^2(i \theta/2)y e^{i\phi}\\
&=2c_1(i^{\ast}L)[F]\dfrac{(t^{c_F}+1)}{(t^{c_F}-1)}t^{\lambda}+\dfrac{-4[F]^2 t^{c_F}}{(t^{c_F}-1)^2}t^{\lambda}
\end{align*}
where $\phi=\frac{2\pi \lambda}{p}$  is the rotation on the fiber in $L$ over a fixed point on $F$ and $z=c_1(i^{\ast}L)$  denotes the restriction of the first Chern class to the fixed set component. Similarly for the contribution to the isolated fixed points. To summarize, given an equivariant line bundle $(L,\pi)\rightarrow (X,\pi)$, the index of the twisted $G$-signature operator gives a virtual character given by

\begin{equation*}
\chi(t)=\sum_i \dfrac{(t^{a_i}+1)}{(t^{a_i}-1)} \dfrac{(t^{b_i}+1)}{(t^{b_i}-1)}t^{\lambda_i}+\sum_j \dfrac{-4\alpha_j t^{c_j}}{(t^{c_j}-1)^2}t^{\lambda_j}+\sum_j 2c_1(i^{\ast}L)[F_j]\dfrac{(t^{c_j}+1)}{(t^{c_j}-1)}t^{\lambda_j}
\end{equation*}
\noindent
where $\{F_j\}$ are fixed $2$-spheres of the action on $X$ with $\alpha_j$ denoting the self-intersection $[F_j] \cdot[F_j]$. Note that 
 
\begin{align}
\chi(1)&=\ch(L)\cL(X)[X]=\left(1+c_1(L)+\frac{1}{2}c_1(L)^2\right)\left(4+\frac{p_1}{3}\right)[X]\\
&=\left(\frac{p_1}{3}+2c_1(L)^2\right)[X]=\Sign(X)+2c_1(L)^2[X].
\end{align}
\noindent
Since $\chi(t)$ is a virtual character for $G=\bZ/p$ we may write $\chi(t)=\sum_{i=0}^{p-1} a_i t^i$ for some $a_i \in \bZ$ and 
\begin{align*}
\chi(t)(t-1)^2=\chi(1) (t-1)^2 + \text{higher order terms}\pmod{p}
\end{align*}
\noindent
We can then take the Taylor expansion of the right-hand side of the twisted $G$-signature formula after multiplying by $(t-1)^2$ and equate the first and second order terms to obtain two additional congruence relations. The first order term vanishes while the second order term is congruent to $\chi(1) \pmod p$. Taking Taylor expansions for these terms in the $G$-signature formula give:
\begin{align*}
\dfrac{(t^{a}+1)}{(t^{a}-1)} \dfrac{(t^{b}+1)}{(t^{b}-1)}(t-1)^2t^{\lambda}&=\frac{4}{ab} + \frac{4(\lambda+1)}{ab}(t-1) +\\
&\frac{1}{3}\frac{(a^2+b^2+1+6\lambda^2+6\lambda)}{ab}(t-1)^2 +\cdots.
\end{align*}
\noindent
Similarly for the second term:
\begin{align*}
\dfrac{-4\alpha t^c}{(t^{c}-1)^2}(t-1)^2t^{\lambda} &=\frac{-4\alpha}{c^2}+\frac{-4\alpha(1+\lambda)}{c^2}(t-1) + \\
&\frac{1}{3}\frac{\alpha(c^2-1-6\lambda-6\lambda^2)}{c^2}(t-1)^2+ \cdots.
\end{align*}
\noindent
and for the third term:
\begin{align*}
2m\dfrac{(t^c+1)}{(t^{c}-1)}(t-1)^2t^{\lambda} &=\frac{4m}{c}(t-1)+\frac{2m(2\lambda+1)}{c}(t-1)^2+ \cdots
\end{align*}
where $m=c_1(i^{\ast}L)[F]$. 
Combining these expressions we obtain the following theorem:
\begin{theorem}\label{thm:fiveone}
Let $(L,\pi)\rightarrow (X,\pi)$ denote an equivariant line bundle over a simply connected, closed $4$-manifold with a homologically trivial group action of a finite cyclic group $\pi=\bZ/p$ of odd order. Then the following congruence relation holds
\begin{align*}
 & (i)\  \sum_i \frac{\lambda_i}{a_ib_i} - \sum_j \frac{\lambda_j \alpha_j}{c_j^2}+\sum_j \frac{c_1(i^{\ast}L)[F_j]}{c_j} \equiv 0 \pmod p \\
 & (ii)\  \sum_i \frac{\lambda_i^2}{a_ib_i} - \sum_j \frac{\lambda_j^2 \alpha_j}{c_j^2}+2\sum_j \frac{\lambda_j c_1(i^{\ast}L)[F_j]}{c_j} \equiv c_1(L)^2[X] \pmod p .
\end{align*}
\end{theorem}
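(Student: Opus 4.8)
The plan is to extract both congruences by comparing the $(t-1)$-adic Taylor coefficients of two descriptions of the twisted virtual character $\chi(t)$: on one hand its expression as the sum of the three fixed-point contributions displayed above, and on the other hand the fact that, as a genuine virtual character of $\bZ/p$, it may be written $\chi(t)=\sum_{i=0}^{p-1}a_it^i$ with $a_i\in\bZ$. Following the method of Section \ref{sec:three}, I would lift the fixed-point identity to $\bZ[t]$, multiply through by $(t-1)^2$, and equate Taylor coefficients about $t=1$ modulo $p$. This comparison is legitimate through order $p-1$ because the lifting ambiguity is a multiple of $\Phi_p(t)$, whose Taylor coefficients about $t=1$ are divisible by $p$ up to that order.

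Next I would analyze the left-hand side $\chi(t)(t-1)^2$. Writing $\chi(t)=\sum a_it^i$ and expanding about $t=1$, the constant and first-order coefficients vanish identically, while the coefficient of $(t-1)^2$ equals $\chi(1)$; by the index computation recalled above, $\chi(1)=\Sign(X)+2c_1(L)^2[X]$. Matching the fixed-point side term-by-term then produces three congruences: the $(t-1)^0$-coefficient is $\equiv 0$, the $(t-1)^1$-coefficient is $\equiv 0$, and the $(t-1)^2$-coefficient is $\equiv\chi(1)\pmod p$.

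The first two comparisons give relation (i). Summing the displayed constant terms yields $4\bigl(\sum_i\tfrac{1}{a_ib_i}-\sum_j\tfrac{\alpha_j}{c_j^2}\bigr)\equiv 0$, which recovers the untwisted relation (1) of Section \ref{sec:three} and serves as a consistency check; summing the first-order terms and removing the ``$+1$'' pieces by that same relation cancels the $\lambda$-independent part and leaves exactly $4$ times the left-hand side of (i), so dividing by $4$ (valid since $p$ is odd) gives (i). For (ii) I would assemble the $(t-1)^2$-coefficient from the three displayed second-order terms and organize the result into three groups: a $\lambda$-independent group, the combination $2\bigl(\sum_i\tfrac{\lambda_i}{a_ib_i}-\sum_j\tfrac{\lambda_j\alpha_j}{c_j^2}+\sum_j\tfrac{m_j}{c_j}\bigr)$, and the remaining combination $2\bigl(\sum_i\tfrac{\lambda_i^2}{a_ib_i}-\sum_j\tfrac{\lambda_j^2\alpha_j}{c_j^2}+2\sum_j\tfrac{\lambda_jm_j}{c_j}\bigr)$. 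Relations (1) and (2) of Section \ref{sec:three} collapse the first group to $\Sign(X)$, the second group vanishes by relation (i) just established, and the third group is exactly twice the left-hand side of (ii); equating the total to $\chi(1)=\Sign(X)+2c_1(L)^2[X]$ and dividing by $2$ then yields (ii).

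The main obstacle I anticipate is the algebraic bookkeeping in the second-order comparison: one must organize the nine summands (three fixed-point types, each contributing a $\lambda^2$-, a $\lambda$-, and a constant piece) so that precisely the right multiples of relations (1), (2), and (i) appear and nothing but $2c_1(L)^2[X]$ survives. The remaining points require only care rather than ingenuity, since the justification that coefficient comparison is valid modulo $p$ through order $p-1$ is inherited verbatim from the cyclotomic argument of Section \ref{sec:three}, and the evaluation $\chi(1)=\Sign(X)+2c_1(L)^2[X]$ via the $\cL$-genus has already been carried out above.
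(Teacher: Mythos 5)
Your proposal is correct and takes essentially the same route as the paper: multiply the twisted $G$-signature identity by $(t-1)^2$, lift to $\bZ[t]$, expand about $t=1$, and equate coefficients modulo $p$ through order $p-1$ (justified, as in Section~\ref{sec:three}, by the $\Phi_p(t)$ lifting ambiguity), with the zeroth-, first-, and second-order comparisons yielding the untwisted relation, relation (i), and relation (ii) via $\chi(1)=\Sign(X)+2c_1(L)^2[X]$. Your explicit bookkeeping at second order --- collapsing the $\lambda$-independent group by relations (1) and (2) of Section~\ref{sec:three}, killing the $\lambda$-linear group by (i), and dividing the remainder by $2$ --- is exactly what the paper leaves implicit in ``combining these expressions.''
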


\begin{example}[Linear models on $\bC P^2$]
Let $p$ denote an odd prime and consider $X=\bC P^2$ and a linear action $t \cdot [z_1:z_2:z_3]=[\zeta^az_1:\zeta^b z_2:z_3]$ for $0 <a<b<p$. We give an explicit construction of equivariant line bundles over $X$. Consider a finite dimensional complex representation  space $V=\bC(\lambda_1)\oplus \bC(\lambda_2) \oplus \bC(\lambda_3)$ with action given by $\rho  \in GL_3(\bC)$:
\eqncount
\begin{equation}
\rho = \begin{pmatrix} 
  t^{\lambda_1}    & &  \\ 
       & t^{\lambda_2} &  \\
       & & t^{\lambda_3}\\
\end{pmatrix}\
: \bC^3\setminus \{0\}  \longrightarrow \bC^3 \setminus \{0\}
\end{equation}
with the $\lambda_i$ positive integer weights. Let $S(V)$ denote the unit sphere in $V$ then $\rho$ commutes with the free $S^1$-action on $S(V)$ and
\begin{align*}
\bC P^2=S(V)/S^1=S^5/S^1.
\end{align*}
The $\rho$-action on $S(V)$ is a lift of the linear action on $X$ if the following system of linear congruences 
\begin{align*}
& \lambda_1-\lambda_3 \equiv a , \quad   \lambda_2 -\lambda_3 \equiv b \\
& \lambda_2-\lambda_1 \equiv b-a,  \quad \lambda_3-\lambda_1 \equiv -a \\
& \lambda_1-\lambda_2\equiv a-b ,\quad \lambda_3 -\lambda_2 \equiv -b.
\end{align*}
has a solution. This system has one degree of freedom; let $\lambda_3 = \lambda$ be a fixed parameter, then the isotropy representations over the three isolated fixed points are given by:
\begin{align*}
&\text{fixed point}~p_1=[0:0:1], \text{rotation number} ~(a,b),  \text{with isotropy} \lambda_3\equiv\lambda\\
&\text{fixed point}~p_2=[1:0:0], \text{rotation number} ~(b-a,-a), \text{with isotropy} \lambda_1\equiv\lambda + a\\
&\text{fixed point}~p_3=[0:1:0], \text{rotation number} ~ (-b,a-b) \text{with isotropy} \lambda_2\equiv\lambda + b.
\end{align*}
The equivariant line bundle 
\begin{align*}
L=S(V)\times_{S^1}\bC
\end{align*}
is the canonical bundle over $\bC P^2$ and the congruence relations of the theorem are satisfied:
\begin{align*}
&\sum_i \frac{\lambda_i}{a_i b_i}\equiv 0 \\
&\sum_i \frac{\lambda_i^2}{a_i b_i} \equiv 1.
\end{align*}
In the case when $b=0$ the action is given by $t\cdot [z_1:z_2:z_3]=[\zeta^a z_1: z_2:z_3]$ and has a fixed projective line $F=\{ z_1 =0\}$ with normal rotation number $c_F \equiv a \pmod p$ and self-intersection $+1$, while the isolated fixed point $[1:0:0]$ has rotation number $(a,a)$. The compatibility for a lift of the linear action is the congruence relation:
\begin{align*}
\lambda - \lambda_F \equiv a \pmod p.
\end{align*}
It is easily seen that the congruence relation of the theorem are satisfied:
\begin{align*}
\frac{\lambda}{a^2}-\frac{\lambda_F}{a^2}+\frac{c_1(i^{\ast}L)[F]}{a}\equiv 0.
\end{align*}
where we used $c_1(i^{\ast}L)[F]=-1$,  since the first Chern class of the canonical line bundle over $\bC P^2$ is negative  of the preferred generator $[F] \in H^2(\bC P^2;\bZ)$ \cite[Theorem 14.10, p.~169]{Milnor:1974}.
Similarly, the second relation is 
\begin{align*}
\frac{\lambda^2}{a^2} -\frac{\lambda_F^2}{a^2}+\frac{2\lambda_F c_1(i^{\ast}L)[F]}{a} 
&\equiv \frac{(a+\lambda_F)^2-\lambda_F^2}{a^2}+\frac{2\lambda_F c_1(i^{\ast}L)[F]}{a} \\
&\equiv 1 \equiv c_1( L)^2[X].
\end{align*}
\end{example}

\begin{example}
Let $\pi=\bZ/p$ act on $X=\bC P^2$ preserving an almost complex structure. Then the complexified second exterior power of the cotangent bundle $K_X = \Lambda^2_{\bC}T^{\ast}X$ is an equivariant line bundle see \cite[Proposition 1.8]{Hambleton:1989}). We check the congruence relations in the case when the action has a fixed projective line and leave the remaining case of isolated fixed points to the reader. 

The action from the linear model is given by  $t \cdot [z_1:z_2:z_3]=[\zeta^az_1:z_2:z_3]$ for some $0< a<p$ and the fixed point set consists of an isolated fixed point with tangential rotation number $(-a,-a)$ and a fixed projective line 
$$F=\{ [z_1:z_2:z_3] \vv z_1 = 0\}$$
 with self-intersection $+1$ and a rotation of $c_F \equiv a \pmod p$ on the normal bundle. 
The isotropy representations over the fixed point set are $K_X \vv_{pt} = t^{2a}$ and $K_X \vv_F = t^{-a}$. With these values substituted it is easy to verify the congruence relations in Theorem \ref{thm:fiveone} for $L = K_X$, noting that $K_X$ is the canonical class of $X$ with dual $-3$ times the standard generator in $H_2(\bC P^2; \bZ)$.
\end{example}

\begin{definition} Let $(X, \pi)$ be a homologically trivial of $\pi=\cy p$ in the setting of Theorem A, with rotation data 
$\mathcal{F}=\{(a_i,b_i),(c_j,\alpha_j)\vv i \in I, j \in J \}.$
  We say that $(X, \pi)$ \emph{satisfies the condition of Theorem A} if there exists a set of isotropy data $\mathcal{I} =\{\lambda_i,\lambda_j \vv i \in I, j \in J\}$,  and a set of integers  $\{m_j  \vv j \in J\}$ so that the equation given in Theorem A holds.
\end{definition}

In the next statement, we apply the equivariant connected sum operation to line bundles. 

\begin{lemma}\label{lem:fivetwo} Suppose that $(X,\pi)$ satisfies the condition of Theorem \textup{A}. There there is an equivariant connected sum $(X \sharp\, \CP^2, \pi)$ with a linear $\pi$-action on $\CP^2$ which also  satisfies the condition of Theorem \textup{A}.
\end{lemma}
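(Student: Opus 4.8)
The plan is to produce $X\sharp\CP^2$ as an \emph{equivariant} connected sum carried out at a single fixed point, to choose the linear action on $\CP^2$ so that the gluing can be made equivariant, and then to verify the condition of Theorem~A for the result by adding the congruence \eqref{eq:thma} for $X$ to the corresponding congruence for the linear $\CP^2$. Since $(X,\pi)$ satisfies the condition, fix isotropy data $\mathcal{I}$ and integers $\{m_j\}$ satisfying \eqref{eq:thma}. As $\chi(X^\pi)=b_2(X)+2>0$ the fixed set is nonempty, so I may choose a fixed point $q$ at which to perform the sum; I treat the main case in which $q$ is isolated, with rotation data $(a_0,b_0)$ and isotropy $\lambda_0$, the case of a point lying on a fixed $2$-sphere being entirely analogous.

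The boundary of a small invariant ball about $q$ is the $3$-sphere carrying the linear action with weights $(a_0,b_0)$, and conjugation in one coordinate gives an orientation-reversing equivariant diffeomorphism onto the $3$-sphere with weights $(-a_0,b_0)$. I therefore choose a linear action on $\CP^2$ having an isolated fixed point $q'$ with rotation data $(-a_0,b_0)$; such a model exists by solving the weight equations in Example~\ref{ex:fourtwo} so that one of the three fixed points is $(-a_0,b_0)$. Gluing the two boundary spheres by this diffeomorphism produces the equivariant connected sum $W=X\sharp\CP^2$. Both $q$ and $q'$ lie in the excised balls, so the rotation data of $W$ is obtained from those of $X$ and of the chosen $\CP^2$ by deleting the two glued strata, while the self-intersection data of the surviving fixed spheres is unchanged.

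The chosen linear $\CP^2$ carries the equivariant canonical bundle, so by the necessity half of Theorem~A, which is exactly Theorem~\ref{thm:fiveone}(i), there are isotropy data $\{\mu_k\}$ and integers $\{m_l\}$ realizing \eqref{eq:thma} for $\CP^2$; write $\mu_0$ for the isotropy at $q'$. I build candidate data for $W$ by keeping the $X$-isotropies on the surviving $X$-strata and shifting all $\CP^2$-isotropies by the global constant $s:=\lambda_0-\mu_0 \pmod p$ on the surviving $\CP^2$-strata, inheriting the integers $m_j$ and $m_l$ from the two pieces. Writing $\Phi(\cdot)$ for the left-hand side of \eqref{eq:thma}, the only removed strata are $q$ and $q'$, so $\Phi(W)$ is $\Phi(X)$ less the $q$-term plus $\Phi_s(\CP^2)$ less the shifted $q'$-term. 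A uniform shift of the isotropies by $s$ alters $\Phi$ by $s\bigl(\sum 1/(a_kb_k)-\sum \alpha_l/c_l^2\bigr)$, which is $\equiv 0 \pmod p$ by congruence relation~(1) applied to $\CP^2$; hence $\Phi_s(\CP^2)\equiv\Phi(\CP^2)\equiv 0$. Since the matching forces $a'_0b'_0=-a_0b_0$ at the glued point, the two leftover terms sum to $(\mu_0+s-\lambda_0)/(a_0b_0)$, which vanishes by the choice of $s$; thus $\Phi(W)\equiv 0$ and $W$ satisfies the condition of Theorem~A.

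The main obstacle is arranging a genuine equivariant connected sum: the requirement that the two gluing spheres be identified by an orientation-reversing equivariant diffeomorphism is what pins the $\CP^2$ weights to the conjugate $(-a_0,b_0)$ and, crucially, forces the sign $a'_0b'_0=-a_0b_0$ that makes the two leftover contributions cancel rather than reinforce. The remaining bookkeeping challenge is the exceptional case where $X$ has no isolated fixed point, so the sum must be based at a point of a fixed $2$-sphere; there the fixed spheres of $X$ and of a linear $\CP^2$ with an invariant line merge into a single sphere whose self-intersection is the sum of the two, but the same cancellation survives because the shift-invariance of $\Phi$ depends only on congruence relation~(1).
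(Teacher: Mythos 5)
Your proof is correct in substance and uses the same basic mechanism as the paper's: an equivariant connected sum with a linear $\pi$-action on $\CP^2$ matched to the local data at the gluing point, followed by additive bookkeeping in the congruence \eqref{eq:thma}. But you have swapped the emphasis between the two cases. The paper performs the sum along a fixed $2$-sphere $F_j$, using the linear action $t\cdot[z_1:z_2:z_3]=[\zeta^{-c_j}z_1:z_2:z_3]$ of Example \ref{ex:fourtwo}: the two fixed spheres merge into one with data $(c_j,\alpha_j+1)$, a new isolated fixed point with rotation data $(-c_j,-c_j)$ is created, and both new strata are assigned the isotropy $\lambda_j$ with $m_j$ kept, so the net change in the left side of \eqref{eq:thma} is exactly $\lambda_j/c_j^2-\lambda_j/c_j^2=0$. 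In effect the paper uses the trivial bundle with constant character $t^{\lambda_j}$ on the $\CP^2$ side; your canonical-bundle-plus-character-shift argument (whose shift-invariance is indeed the relation $\sum_i 1/(a_ib_i)-\sum_j\alpha_j/c_j^2\equiv 0$ from Section \ref{sec:three}, applied to the linear model) is a heavier route to the same cancellation, and your main case at an isolated fixed point, with the sign $a_0'b_0'=-a_0b_0$ forcing the leftover terms to cancel, is fine as far as it goes. Note, however, why the paper privileges the sphere case: the lemma is invoked in the proof of Theorem A precisely to \emph{create} an isolated fixed point when $(X,\pi)$ has none, while keeping all the original data $\{\lambda_i,\lambda_j,m_j\}$ intact for the later restriction argument; your main case presupposes an isolated fixed point (the situation where the lemma is not needed) and deletes its datum $\lambda_0$.

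This makes your final sketch of the sphere case the load-bearing one, and it has two concrete omissions. First, the linear $\CP^2$ with an invariant line also contributes a surviving isolated fixed point with rotation data $(-c_j,-c_j)$ to the new fixed set; your recipe would include it implicitly, but the sketch never mentions it. Second, "the same cancellation survives" is not automatic from shift-invariance: with your shifted data the leftover contribution works out to $(m''-m_j+m')/c_j \pmod p$, where $m'$ is the $\CP^2$-side Chern number and $m''$ the integer assigned to the merged sphere. If you insist on additivity of Chern numbers, $m''=m_j+m'$, the leftover is $2m'/c_j$, which is nonzero for the canonical bundle ($m'=-1$). Since the condition of Theorem A only requires the \emph{existence} of integers $\{m_j\}$, the fix is one line --- choose $m''=m_j-m'$, or simply use the trivial bundle with character so that $m'=0$, as the paper implicitly does --- but as written the sphere case needs this extra choice spelled out.
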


\begin{proof} We first suppose that $(X, \pi)$ contains a fixed $2$-sphere $F_j$ with data $\{\alpha_j, c_j\}$. Let $(\CP^2, \pi)$ be the linear action given by  $t \cdot [z_1:z_2:z_3]=[\zeta^{-c_j}z_1:z_2:z_3]$ as in Example \ref{ex:fourtwo}. We do the equivariant connected sum (preserving the orientations) along a point in $F_j$ and a point in the fixed $2$-sphere of $\CP^2$. The new data is obtained by (i) adding the data $\{(-c_j, -c_j); \lambda_j\}$ for the newly created isolated fixed point (on $\CP^2$), and (ii) the data $\{(c_j, \alpha_j + 1); \lambda_j\}$ for the  new fixed $2$-sphere. With these choices, it follows that the action $(X \sharp\, \CP^2, \pi)$ satisfies the condition of Theorem A. The proof in case the action $(X, \pi)$ has only isolated fixed points is easier, and will be left to the reader.
\end{proof}

\begin{remark} Suppose that $(X, \pi)$  has  data  satisfying the condition of Theorem A, and contains a fixed $2$-sphere $F$. Let $X_0\subset X$ denote the complement of a  linear $\pi$-invariant $4$-ball neighbourhood of a point $x \in F$. If  $L$ is an equivariant line bundle over $(X \sharp\, \CP^2, \pi)$,  then the restriction of $L$ to $X_0$ extends to an equivariant line bundle over $(X, \pi)$ realizing the given data.
\end{remark}
\section{The proof of Theorem A}\label{sec:five}
The first relation in Theorem \ref{thm:fiveone} proves the necessary conditions of Theorem A. To prove sufficiency we will need the following lemmas. 

Note that in a standard lens space $Y=L(n;a,b)$, a generator $\mu \in H_1(Y; \bZ)$ is represented by a circle fibre in the fibration $S^1 \to L(n;a,b) \to S^2$ given by the quotient of a free $\cy n$ action on $S^3$.

\begin{lemma}\label{lem:fivethree}
The linking paring $lk\colon H_1(Y)\times H_1(Y) \rightarrow \bQ/\bZ$ in the lens space $Y=L(n;a,b)$ is given by $lk(\mu,\mu)=\dfrac{ab}{n}$ where $\mu$ is a generator of $H_1(Y;\bZ)=\bZ/n$.
\end{lemma}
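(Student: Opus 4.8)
The plan is to compute the linking form directly from the defining free action, using the description of $\mu$ as the image of a regular orbit of a weighted circle action on $S^3$. Write $L=L(n;a,b)=S^3/(\cy n)$, where a generator $t$ of $\cy n$ acts by $t\cdot(z_1,z_2)=(\zeta^a z_1,\zeta^b z_2)$ with $\zeta=e^{2\pi i/n}$ and $\gcd(a,n)=\gcd(b,n)=1$. This $\cy n$ sits inside the weighted circle action $\omega\cdot(z_1,z_2)=(\omega^a z_1,\omega^b z_2)$, whose quotient is the base $S^2$ of the fibration. A regular orbit $O$ of this circle action is an $(a,b)$-curve on a Clifford torus in $S^3$, it is invariant under $\cy n$, and the covering $p\colon S^3\to L$ restricts to the connected $n$-fold cyclic cover $p|_O\colon O\to\mu$ of the fibre $\mu$. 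In particular $\mu$ generates $\pi_1(L)=H_1(L)=\cy n$, so $\mu$ has order $n$ and $n\mu$ bounds.

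The computation then proceeds in three steps. First, choose a Seifert surface (or rational $2$-chain) $B\subset S^3$ with $\partial B=O$ and set $C=p_\ast(B)$; since $p|_O$ has degree $n$ we get $\partial C=n\mu$, so $C$ is an admissible chain for evaluating $lk(\mu,\mu)$. Second, let $O'$ be a neighbouring regular fibre (the canonical pushoff of $O$ in the fibration), with image $\mu'$ representing the same class $[\mu]\in H_1(L)$ and disjoint from $\mu$. By the projection formula for the finite covering $p$, together with $p^{-1}(\mu')=O'$, one gets $(C\cdot\mu')_L=(B\cdot O')_{S^3}=lk_{S^3}(O,O')$, and hence
\[
lk(\mu,\mu)\equiv \tfrac1n\,(C\cdot\mu')\equiv \tfrac1n\,lk_{S^3}(O,O')\pmod{\bZ}.
\]
Third, I would evaluate $lk_{S^3}(O,O')$, the linking number of two parallel fibres of the weighted Hopf fibration: this equals $ab$ (equivalently, it is the reciprocal of the rational Euler number $1/(ab)$ of the weighted fibration, and specializes to the Hopf value $1$ when $a=b=1$). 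Substituting gives $lk(\mu,\mu)=ab/n$.

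The main obstacle is the third step: pinning down $lk_{S^3}(O,O')=ab$ with the correct sign and, crucially, with respect to the correct framing, namely the pushoff coming from the fibration rather than an arbitrary parallel copy (different framings change the intermediate integer but not the residue $ab/n\bmod\bZ$ once the fibration framing is used consistently). Concretely I would compute this linking number by viewing $O$ and $O'$ as $(a,b)$-curves on nested tori around the core circle $\{z_2=0\}$ and counting intersections of $O'$ with a Seifert surface for $O$, taking care when $\gcd(a,b)>1$ so that $O$ is the principal orbit. As an independent check, the same value can be obtained from a bounding $4$-manifold: resolving the quotient singularity of $D^4/(\cy n)$ produces a smooth $W$ with $\partial W=L$ whose intersection form $Q$ is a linear plumbing coming from a continued-fraction expansion, and the linking form is then $-Q^{-1}\bmod\bZ$; this recovers the classical lens-space linking form and can be matched with $ab/n$ under the orientation conventions of the statement, cross-checking both the value and the sign.
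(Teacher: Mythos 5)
Your argument is correct in outline, but it takes a genuinely different route from the paper. The paper's proof is a two-step reduction to a quoted classical fact: for the standard model $L(n;q)$ one has $lk(\mu,\mu)=q/n$; then $L(n;a,b)$ is literally the same quotient as $L(n;a^{*}b)$ after replacing the generator $t$ of $\cy n$ by $t^{a^{*}}$, this reparametrization multiplies the distinguished generator of $H_1$ by $a$, and bilinearity gives $lk(a\mu,a\mu)=a^{2}\cdot a^{*}b/n=ab/n$. You instead prove everything from scratch upstairs in $S^3$: a Seifert surface $B$ for a regular orbit $O$, the pushforward chain with $\partial p_{*}(B)=n\mu$, the projection formula $(p_{*}B\cdot\mu')=(B\cdot p^{-1}(\mu'))=(B\cdot O')$ (valid because orbits are $\cy n$-invariant, so $p^{-1}(\mu')$ is the single orbit $O'$), and the classical linking number of two regular fibres of the weighted circle action. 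Your version is more self-contained, since it derives the base case rather than citing it, at the price of having to pin down $lk_{S^3}(O,O')=ab$; that fact is standard for coprime weights, and both of your plans for verifying it (intersecting with a Seifert surface, or the plumbing cross-check via $-Q^{-1}$) work.

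Two adjustments are needed. First, your handling of $\gcd(a,b)=d>1$ does not work as written: there is no choice of ``principal orbit'' that repairs it, because \emph{every} generic orbit of the weighted action is then an $(a/d,b/d)$-curve, so $lk_{S^3}(O,O')=ab/d^{2}$ and your computation returns $ab/(d^{2}n)$, which is the self-linking of the Seifert-fibre class of that fibration and differs from $ab/n$ (the fibre class is then $d^{*}$ times the generator singled out by the deck transformation $t$, where $dd^{*}\equiv 1 \pmod n$). The clean fix is to observe that both $L(n;a,b)$ and the residue $ab/n\in\bQ/\bZ$ depend only on $a,b$ modulo $n$, and since $\gcd(a,n)=\gcd(b,n)=1$ one may replace $a$ by $a+kn$ with $\gcd(a+kn,b)=1$ (choose $k$ avoiding one residue class modulo each prime dividing $b$; no such prime divides $n$), after which your argument runs verbatim; alternatively keep $d>1$ and use bilinearity together with $[t]=d\,\mu_{\mathrm{fibre}}$ to recover $d^{2}\cdot(a/d)(b/d)/n=ab/n$. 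Second, your framing worry in the third step is a non-issue: the linking form is well defined on homology classes, so $\frac{1}{n}(C\cdot\mu')$ is independent of which representative $\mu'$ of $[\mu]$ disjoint from $\mu$ you take; you choose $\mu'$ to be a nearby fibre not to fix a framing but so that $p^{-1}(\mu')$ is a single orbit, and with that choice the standard torus-curve count gives $ab$ with the sign matching the paper's convention (Hopf case $a=b=1$ giving $+1$).
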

\begin{proof}
In the usual representation of lens spaces $L(n;q)$ as the quotient of the free $\bZ/n$ action $t\cdot(z_1, z_2) = (\zeta z_1, \zeta^q z_2)$ on $S^3$,  the linking pairing is given by $lk(\mu,\mu)=\dfrac{q}{n}$. The diffeomorphism $L(n;a^{\ast}b) \rightarrow L(n;a,b)$  where $a^\ast$ is the multiplicative inverse mod $n$ arising from changing the generator in $\bZ/n$ induces a map on first homology given by multiplication by $a$. It follows that the linking pairing  on $Y$ is given by 
%\begin{align*}
$$lk(a\mu,a\mu)=a^2\cdot \left(\dfrac{a^{\ast}b}{n}\right)=\dfrac{ab}{n} \in \bQ/\bZ.
$$
\vskip-15pt
\end{proof}
\begin{lemma}[{\cite[Proposition 1.4, p.~621]{Hambleton:1989}, \cite[Lemma 2.11, p.~95]{Hedden:2011}}]
Let $\phi\colon\pi_1(Y) \longrightarrow U(1)$ be the holonomy representation of a flat $U(1)$-bundle over the lens space $Y=L(n;a,b)$ that sends a generator $\mu$ to $\exp(2\pi i \lambda/n)$. Then the Poincar\'{e} dual of the first Chern class $PD(c_1(L))$ is given by $\dfrac{\lambda}{ab}[\mu]$ in $H_1(Y;\bZ)$. 
\end{lemma}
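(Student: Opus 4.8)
The plan is to compute $c_1(L)$ through the Bockstein description of the Chern class of a flat bundle, and then to pin down the resulting torsion class in $H_1(Y;\bZ)$ by pairing it against $\mu$ via the linking form of Lemma \ref{lem:fivethree}. Since $L$ is flat, its class is the image of the holonomy class under the exponential coefficient sequence $0 \to \bZ \to \bR \to U(1) \to 0$: writing $u \in H^1(Y; U(1)) = \Hom(\pi_1(Y), U(1))$ for the class of $\phi$ and $\beta\colon H^1(Y; U(1)) \to H^2(Y;\bZ)$ for the connecting (Bockstein) map, one has $c_1(L) = \beta(u)$. Because the holonomy takes values in roots of unity, I will regard $u$ as lying in $H^1(Y;\bQ/\bZ)$, so that $\la u, \mu\ra = \lambda/n \in \bQ/\bZ$ under the evaluation (Kronecker) pairing with $H_1(Y;\bZ)$.

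The key input is the standard duality between the torsion linking form and the Bockstein on a closed oriented $3$-manifold, namely
\[
lk\bigl(PD(\beta u),\, y\bigr) = \la u, y\ra \in \bQ/\bZ \qquad \text{for all } y \in H_1(Y;\bZ).
\]
Taking $y = \mu$ gives $lk(PD(c_1(L)), \mu) = \lambda/n$. Since $H_1(Y;\bZ) = \bZ/n$ is generated by $\mu$, I may write $PD(c_1(L)) = s[\mu]$ for some integer $s$, and then bilinearity of the linking form together with Lemma \ref{lem:fivethree} yields $s\cdot \tfrac{ab}{n} \equiv \tfrac{\lambda}{n} \pmod{\bZ}$, i.e.\ $s\, ab \equiv \lambda \pmod n$. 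As $(ab,n)=1$, the factor $ab$ is invertible mod $n$ and $s \equiv \lambda/(ab) \pmod n$, which is exactly the asserted value $PD(c_1(L)) = \tfrac{\lambda}{ab}[\mu]$.

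The main obstacle is establishing the linking--Bockstein identity with the correct sign and normalization conventions; one must check that the orientation convention making $lk(\mu,\mu) = ab/n$ in Lemma \ref{lem:fivethree} is compatible with the Poincar\'e-duality convention used to define $PD(c_1(L))$, since an inconsistency would flip $\lambda/(ab)$ to $-\lambda/(ab)$. I would handle this either by citing the definition of the linking form directly through $\beta$ (so that the identity holds essentially by definition) or, to keep the argument self-contained, by verifying it on the explicit flat bundle over $L(n;a,b)$ coming from the canonical $S^1$-quotient description, where both sides can be evaluated by hand. The remaining steps --- the Bockstein identification and the bilinear solve --- are routine once this convention is fixed.
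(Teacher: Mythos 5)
Your proposal is correct and follows essentially the same route as the paper: the paper's proof also identifies $PD(c_1(L))$ with the holonomy representation via the adjoint of the linking form (its map $e^{2\pi i\cdot lk(m\mu,-)}$ is precisely your linking--Bockstein identity $lk(PD(\beta u),y)=\la u,y\ra$) and then solves $m\,ab\equiv\lambda \pmod n$ using Lemma \ref{lem:fivethree}. Your version merely makes explicit the Bockstein from the exponential sequence and the sign/normalization check, which the paper leaves implicit.
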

\begin{proof}
The adjoint to the linking form $\Phi\colon H_1(Y;\bZ) \rightarrow \Hom(H_1(Y;\bZ), \bQ/\bZ)$ sends  $m[\mu]$ to $lk(m\mu, -)$ which can be identified with  the holonomy representation of the flat bundle via :
$$e^{2\pi i \cdot lk(m\mu, -)}\colon H_1(Y;\bZ) \rightarrow U(1)$$
and this maps the generator to $\exp(2\pi i m\cdot (ab/n))$. It follows that $m\equiv \dfrac{\lambda}{ab} \pmod n$.
\end{proof}
If $Y$ is a lens space, we let
$\hat\mu \in H^2(Y;\bZ)$ denote a standard cohomology generator: the Poincar\'e dual to the circle fibre class $\mu \in H_1(Y;\bZ)$.

\begin{lemma}\label{lem:fivefour}
Let  $u\colon Y'\to Y$  be a $d$-fold regular covering of lens space, where $H_1(Y;\bZ) \cong \cy{dn}$ and $H_1(Y';\bZ) \cong \cy n$, with $\gcd(d, n) = 1$. Then $u^*(\hat\mu) = \hat\mu' \in H^2(Y'; \bZ)$.
\end{lemma}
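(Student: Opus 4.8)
The plan is to reduce the statement to a single pushforward computation on first homology, combined with the projection formula for the covering $u$. Both $Y$ and $Y'$ are lens spaces, and by definition $\hat\mu = \mathrm{PD}(\mu)$, $\hat\mu' = \mathrm{PD}(\mu')$ are the Poincar\'e duals of the fibre classes $\mu \in H_1(Y;\bZ)$ and $\mu' \in H_1(Y';\bZ)$. Since capping with the fundamental class is an isomorphism $\,\cap[Y']\colon H^2(Y';\bZ)\xrightarrow{\ \cong\ } H_1(Y';\bZ)$, to prove $u^*\hat\mu = \hat\mu'$ it is enough to verify the single identity $u^*\hat\mu \cap [Y'] = \mu'$.

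First I would record two formal facts. The covering has degree $d$, so $u_*[Y'] = d[Y]$; and $u_*\colon H_1(Y';\bZ)\to H_1(Y;\bZ)$ is the abelianization of the inclusion of $\pi_1$-subgroups, hence \emph{injective}. Applying the projection formula to $\beta = \hat\mu$ and $a = [Y']$ gives
$$u_*\bigl(u^*\hat\mu \cap [Y']\bigr) = \hat\mu \cap u_*[Y'] = d\,\bigl(\hat\mu\cap[Y]\bigr) = d\mu .$$
Thus, granting injectivity of $u_*$, the entire lemma collapses to the exact equality $u_*\mu' = d\mu$ in $H_1(Y;\bZ)$: once this is known, the displayed line reads $u_*(u^*\hat\mu\cap[Y']) = d\mu = u_*\mu'$, and cancelling $u_*$ yields $u^*\hat\mu\cap[Y'] = \mu'$, as required.

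The crux, and the step I expect to be the real obstacle, is computing $u_*\mu'$ \emph{exactly} rather than merely up to a unit of $\cy n$; the abstract subgroup picture alone only gives $u_*\mu' = (\text{unit})\cdot d\mu$. To pin down the unit I would write $Y = L(dn;a,b) = S^3/G$ with $G = \langle g\rangle$, $g\cdot(z_1,z_2) = (\omega^a z_1,\omega^b z_2)$ and $\omega = e^{2\pi i/(dn)}$; the index-$d$ subgroup is $H = \langle g^d\rangle\cong\cy n$, and $u$ is the quotient $S^3/H\to S^3/G$, so $Y' = L(n;a,b)$. Take $\mu,\mu'$ to be the images of the Hopf circle $C=\{(z_1,0)\}$ over the base fixed point $[1:0]$, on which $G$ acts by $z_1\mapsto\omega^a z_1$ and $H$ by $z_1\mapsto\omega^{ad}z_1$. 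Tracing a lift of the once-around loop to the arc joining $(1,0)$ to the adjacent orbit point identifies $\mu = g^{\,a^{-1}\bmod dn}$ in $\pi_1(Y)=G$ and $\mu' = (g^d)^{\,a^{-1}\bmod n}$ in $\pi_1(Y')=H$. Because $a^{-1}\bmod dn \equiv a^{-1}\bmod n \pmod n$, the inclusion $H\hookrightarrow G$ sends $\mu'$ to $g^{\,d(a^{-1}\bmod n)} = d\mu$, which is precisely $u_*\mu' = d\mu$.

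Feeding this back into the projection-formula identity of the second paragraph finishes the argument. I would emphasize that the only subtle point is the last computation: the formal machinery delivers $u_*\mu'$ up to an automorphism of $\cy n$, and it is the explicit realization of the fibre class as the Hopf circle over a fixed point of the base action that forces the unit to equal $1$. The standing coprimality $\gcd(d,n)=1$ gives the clean subgroup/quotient identification $G = \cy d\times\cy n$ used to describe the cover, though the congruence $a^{-1}\bmod dn \equiv a^{-1}\bmod n\pmod n$ that actually pins the unit holds regardless.
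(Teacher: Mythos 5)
Your proof is correct and is essentially the paper's own argument: both rest on the projection formula together with $u_*[Y'] = d[Y]$ to reduce the lemma to the exact pushforward identity $u_*\mu' = d\mu$, and your injectivity-and-cancellation step is the same computation the paper phrases as $u^*(\hat\mu) = k\hat\mu'$ forcing $kd \equiv d \pmod{dn}$, hence $k \equiv 1 \pmod n$. The only difference is that you explicitly verify $u_*\mu' = d\mu$ (pinning the unit via the Hopf circle over a fixed point, $\mu = g^{\,a^{-1} \bmod dn}$, $\mu' = (g^d)^{\,a^{-1}\bmod n}$), a fact the paper simply asserts, so you have filled in a detail rather than taken a different route.
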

\begin{proof}
For a $d$-fold regular covering $u\colon Y'\to Y$ of lens spaces, we have $u_*[\mu'] = d[\mu] \in H_1(Y, \bZ)$ and $u_*[Y'] = d[Y]\in H_3(Y;\bZ)$. The cohomology generator
$\hat\mu = [Y]\cap \mu \in H^2(Y;\bZ)$ is the Poincar\'e dual of $\mu$, and similarly for $\hat\mu' \in H^2(Y';\bZ)$. We have the formula
$$u_*([Y']\cap u^*(\hat\mu)) = u_*(\mu') =  d\mu.$$
 If $H_1(Y;\bZ) = \cy{dn}$ and $H_1(Y';\bZ) = \cy n$, where $\gcd(d, n) = 1$, then $u^*(\hat\mu) = k\hat\mu'$ implies that $k \equiv 1 \pmod n$. Hence $u^*(\hat\mu) = \hat\mu' \in H^2(Y';\bZ)$.
\end{proof}

\parni
Suppose that $(X,\pi)$ satisfies the assumptions of Theorem \textup{\ref{thm:fiveone}}. Let $\Sigma\subset X$ denote the singular set of the action, and let $X_0 := X - \Sigma$. Write $\alpha_j  = p^{a_j}\ba_j$, where $\overline{\alpha_j}$ is prime to $p$ (for each fixed $2$-sphere $F_j$).

\begin{lemma}\label{lem:fivesix}  If the singular set $\Sigma \subset X$ contains an isolated point, then $H_1(X_0;\bZ)$ is a quotient of $\bigoplus \cy {\ba_j}$ and  has order prime to $p$.
\end{lemma}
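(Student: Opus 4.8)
The plan is to compute $H_1(X_0;\bZ)$ by a Mayer--Vietoris argument and then isolate its $p$-primary part. First I would discard the isolated points of $\Sigma$: removing finitely many points from a manifold of dimension $\ge 3$ does not change $\pi_1$, hence $H_1(X_0;\bZ)\cong H_1\bigl(X-\bigsqcup_j F_j;\bZ\bigr)$. Writing $N_j$ for an equivariant tubular neighbourhood of the fixed $2$-sphere $F_j$, its boundary is the circle bundle of Euler number $\alpha_j$, i.e. the lens space $L(\alpha_j;1)$ with $H_1=\bZ/\alpha_j$ generated by the meridian $\mu_j$. Since each $N_j$ retracts to $S^2$ (so $H_1(N_j)=0$) and $X$ is simply connected, the Mayer--Vietoris sequence for $X=(X-\bigsqcup_j F_j)\cup\bigsqcup_j N_j$ collapses to give $H_1(X_0;\bZ)\cong\operatorname{coker}\bigl(\partial\colon H_2(X;\bZ)\to\bigoplus_j\bZ/\alpha_j\bigr)$, where the connecting map is the intersection pairing $\partial(Z)=\bigl((Z\cdot F_j)\,\mu_j\bigr)_j$. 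In particular $H_1(X_0;\bZ)$ is already a quotient of $\bigoplus_j\bZ/\alpha_j$. Decomposing $\bZ/\alpha_j\cong\bZ/p^{a_j}\oplus\bZ/\ba_j$, the prime-to-$p$ summands yield the asserted quotient of $\bigoplus_j\bZ/\ba_j$, so the entire content is that the $p$-primary part of the cokernel vanishes. By Nakayama this is equivalent to surjectivity of $H_2(X;\bZ)\to\bigoplus_j\bZ/p$, $Z\mapsto(Z\cdot F_j)_j$, and, since the mod $p$ intersection form is nondegenerate, to the linear independence of the classes $[F_j]$ in $H_2(X;\bZ/p)$.

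The decisive step, and the one I expect to be the main obstacle, is precisely this independence; it is where the hypothesis that $\Sigma$ contains an isolated point is used. I would argue in Borel equivariant cohomology with $\bZ/p$ coefficients. For each $F_j$ the equivariant Gysin map $\iota_{j*}\colon H^*_\pi(F_j)\to H^{*+2}_\pi(X)$ produces a class $\iota_{j*}(1)\in H^2_\pi(X;\bZ/p)$ which restricts on $F_j$ to the equivariant Euler class $e_\pi(N_j)=\alpha_j\gamma_j+c_j u$ of the normal bundle (here $\gamma_j$ generates $H^2(F_j)$, $u\in H^2(B\pi)$ is the standard generator, and $c_j\not\equiv 0\pmod p$), vanishes on the other fixed components, and reduces to $\mathrm{PD}[F_j]$ under the specialization $u=0$. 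Suppose $\sum_j\lambda_j[F_j]=0$ in $H_2(X;\bZ/p)$, so $\sum_j\lambda_j\,\mathrm{PD}[F_j]=0$; then $\Lambda:=\sum_j\lambda_j\,\iota_{j*}(1)$ lies in the kernel of the restriction-to-fibre map $H^2_\pi(X;\bZ/p)\to H^2(X;\bZ/p)$. Because $H^1(X;\bZ/p)=0$, the Leray--Serre spectral sequence of $X\to X_\pi\to B\pi$ forces this kernel to be exactly $\bZ/p\cdot u$ (its associated graded is a subquotient of $E_2^{1,1}\oplus E_2^{2,0}=0\oplus\langle u\rangle$), so $\Lambda=\nu u$ for some $\nu\in\bZ/p$.

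Now I would restrict $\Lambda$ to an isolated fixed point $x_0$, which exists by hypothesis: since $x_0\notin F_j$ for every $j$ we get $0=\Lambda|_{x_0}=\nu u$, and as $u$ restricts nontrivially to $H^2_\pi(x_0)=H^2(B\pi)$ this gives $\nu=0$, whence $\Lambda=0$. Restricting the now-vanishing class $\Lambda$ to $F_j$ yields $\lambda_j e_\pi(N_j)=0$, and reading off the coefficient of $u$ gives $\lambda_j c_j=0$; since $c_j$ is a unit mod $p$ we conclude $\lambda_j=0$ for all $j$. This proves the $[F_j]$ are independent in $H_2(X;\bZ/p)$, hence $H_2(X;\bZ)\to\bigoplus_j\bZ/p^{a_j}$ is onto and the $p$-primary part of the cokernel is trivial. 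The role of the isolated point is genuinely essential, as the necessity check shows: the rotation of $S^2\times S^2$ in one factor is homologically trivial with two fixed spheres of self-intersection $0$ and no isolated points, the two fixed classes coincide, and $H_1$ of the complement is infinite — exactly the failure one predicts when the step $\nu=0$ is unavailable. Combining the two inputs, $H_1(X_0;\bZ)$ is a quotient of $\bigoplus_j\bZ/\ba_j$ of order prime to $p$, as asserted.
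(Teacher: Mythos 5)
Your proof is correct, and it differs from the paper's in one genuinely substantive way. Both arguments reduce the lemma to the same two facts: (a) $H_1(X_0;\bZ)$ is a quotient of $\bigoplus_j \cy{\alpha_j}$ via the intersection map $Z \mapsto (Z\cdot F_j)_j$, and (b) the classes $[F_j]$ are linearly independent in $H_2(X;\bZ/p)$. For (a) the paper uses exactly your Mayer--Vietoris sequence (its boundary components around isolated points are $S^3$'s, which is why it can write $H_1(\bd X_0)=H_1(\bd\nu(F))$, matching your observation that deleting points does not change $\pi_1$); for the prime-to-$p$ order it uses the duality $H_1(X_0)\cong H^3(X,F)$ and the cohomology sequence of the pair $(X,F)$, with $H^3(X)=0$ --- a repackaging of your Nakayama reduction, since both come down to mod-$p$ surjectivity of $H^2(X)\to H^2(F)$. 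The real divergence is at (b): the paper simply cites \cite[Corollary 2.6]{Edmonds:1989}, whereas you re-prove the independence from scratch with the Borel--Gysin classes $\iota_{j*}(1)$, the Serre spectral sequence bound $\ker\bigl(H^2_\pi(X;\bZ/p)\to H^2(X;\bZ/p)\bigr)\subseteq \langle u\rangle$ (valid since $H^1(X;\bZ/p)=0$), evaluation at the isolated fixed point to force $\nu=0$, and then $\lambda_j c_j=0$ from $e_\pi(N_j)=\alpha_j\gamma_j+c_j u$ with $c_j$ a unit mod $p$. This makes the paper's proof self-contained and, more importantly, makes visible exactly where the hypothesis of an isolated fixed point is used --- in the paper this is hidden inside the citation --- and your $S^2\times S^2$ rotation example confirming the necessity of that hypothesis is a worthwhile bonus. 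Two small points to record for completeness: the spectral sequence coefficients are untwisted because homological triviality together with the torsion-freeness of $H_*(X;\bZ)$ gives a trivial action on $H^*(X;\bZ/p)$; and your Nakayama step needs surjectivity only onto $\bigoplus_{\{j\,:\,a_j\geq 1\}}\cy{p}$, which the full independence statement of course delivers.
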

\begin{proof}  Let $F \subset X$ denote the disjoint union of the  fixed $2$-spheres, so $F = \bigcup_j F_j$. First note that $H_1(X_0) \cong H^3(X, F)$ and we have an exact sequence
$$ \dots \to H^2(X) \to H^2(F) \to H^3(X, F) \to H^3(X) \to \dots$$
Since 
$H^3(X) = 0$, and the homology classes of the fixed $2$-spheres are linearly independent mod $p$ (by \cite[Corollary 2.6]{Edmonds:1989}),  it follows 
 that $H^3(X, F)\cong H_1(X_0)$ is a torsion group of order prime to $p$. 
 
Moreover, the exact Mayer-Vietoris sequence
$$ 0 \to H_2(X_0) \oplus H_2(F) \to H_2(X) \to H_1(\bd X_0) \to H_1(X_0) \to 0$$
and the equality $H_1(\bd X_0) = H_1(\bd\nu(F))  \cong \bigoplus \cy {\alpha_j}$ completes the proof.
\end{proof}

\begin{proof}[The proof of Theorem A]
By Theorem \ref{thm:fiveone}(i), the indicated formulas hold if $(X,\pi)$ admits an equivariant line bundle. 
It remains to prove that a solution $\{ \lambda_i, \lambda_j, m_j \vv i \in I, j\in J \}$ to the congruence relation 
\eqncount
\begin{equation}\label{eqn:congrence}
\sum_i \frac{\lambda_i}{a_ib_i} - \sum_j \frac{\lambda_j \alpha_j}{c_j^2}+\sum_j \frac{m_j}{c_j} \equiv 0 \pmod p 
\end{equation}
is sufficient for the existence of an equivariant line bundle with  $\{\lambda_i, \lambda_j\}$ isotropy representations over the isolated fixed points and $2$-spheres respectively, and $m_j = c_1(i^{\ast} L \vv_{F_j} )$. \\

For simplicity, we will assume that the action $(X,\pi)$ contains at least one isolated fixed point. This may always be arranged by taking the equivariant connected sum of $(X,  \pi)$ along a fixed $2$-sphere with a suitable linear $\pi$-action on $\CP^2$  (see Lemma \ref{lem:fivetwo}).

Let $X_0 = X-N$, where $N = \nu(\Sigma)$ is a $\pi$-invariant tubular neighbourhood of the singular set $\Sigma \subset X$. More explicitly,  $X_0$ is the compact $4$-manifold with boundary obtained  by removing $\pi$-invariant $4$-balls around each isolated fixed point and $\pi$-invariant tubular neighbourhoods $D^2 \rightarrow \nu(F_j) \rightarrow F_j$ around each $\pi$-fixed $2$-sphere $F_j$ with rotation $t^{c_j}$ on $D^2$-fibers. Then $\nu(F_j)$ is a $2$-disk bundle over $S^2$ with Euler class $\alpha_j [F] \in  H^2(F;\bZ)\cong \bZ$, and the lens space
 $\bd \nu(F_j) = L(\alpha_j, 1)$ inherits a free $\cy p$ action with rotation number $c_j$ on the circle fibre.

If $W_0 :=X_0/\pi$ denotes the quotient manifold with (regular) covering map $q\colon X_0 \to W_0$ classified by $u \colon W_0 \to B\pi$, then the boundary $\partial W_0$ consists of lens spaces $Y_i=L(p;a_i, b_i)$ and $Y_j=L(p\alpha_j; c_j,c_j)$. Note that $$H_1(W_0; \bZ) \cong \cy p \oplus H_1(X_0, \bZ)$$ by the spectral sequence of the covering. By Lemma \ref{lem:fivesix}, $H_1(X_0;\bZ)$ is a quotient of 
$\bigoplus \cy {\ba_j}$  and has order prime to $p$.

Recall that $\pi$-equivariant line bundles $L$ over  $(X, \pi)$ are classified by  an element
$$\theta (L)\in H^2_\pi(X; \bZ) = H^2(X\times_\pi E\pi; \bZ)$$
in the Borel  equivariant cohomology of $X$
 (see \cite{Lashof:1983}). Since the $\pi$-action on $X_0$ is free, for the restriction $L_0 \searrow X_0$ we have
$\theta(L_0) \in H^2_\pi(X_0; \bZ) \cong H^2(W_0; \bZ)$, and $\theta(L_0) = c_1(\bar L_0)$, where $\bar L_0$ is the line bundle over $W_0$ obtained by dividing out the free $\pi$-action on the total space of $L_0$.
Moreover, in the short exact sequence
$$ 0 \to H^2(\cy p; \bZ) \xrightarrow{c^*} H^2(W_0;\bZ) \xrightarrow{q^*} H^2(X_0; \bZ) \to 0$$
the pullback $q^*(\theta(L_0)) = c_1(q^*(\bar L_0)) = c_1(L_0) \in H^2(X_0; \bZ)$.

The strategy will be to find a suitable element $\theta (L)\in H^2_\pi(X; \bZ) $ by studying the Mayer-Vietoris sequence
$$ \dots \to H^2_\pi( X)  \to H^2_\pi(X_0) \oplus H^2_\pi(N) \to H^2_\pi(\bd X_0) \xrightarrow{\delta} H^3_\pi(X) \to  H^3_\pi(X_0) \oplus H^3_\pi(N)\to \dots$$
in Borel cohomology associated to the $\pi$-equivariant decomposition $X = X_0 \cup N$.

\medskip
{\bf Obervations}:
\begin{enumerate}\label{obs}
\item\label{obs:one} The Mayer-Vietoris coboundary map $H^2_\pi(\bd X_0) \xrightarrow{\delta} H^3_\pi(X)$ factors as 
$$H^2_\pi(\bd X_0) \xrightarrow{\delta} H^3_\pi(X_0, \bd X_0) \cong H^3_\pi(X, N) \to H^3_\pi(X).$$
\item \label{obs:two}The cokernel of the map $ H^2_\pi(N) \to H^2_\pi(\bd X_0)$ has exponent $p$. This follows from the commutative diagram of restriction maps
$$\xymatrix@R-10pt@C-10pt{ H^2_\pi(N) \ar[r] \ar[d] &  H^2_\pi(\bd X_0) \ar[d] \ar[r]  &  \bigoplus \cy{p\alpha_j} \ar[d]\\
 H^2(N) \ar[r] & H^2(\bd X_0)\ar[r]^\cong &    \bigoplus \cy{\alpha_j}
}$$
since the map $H^2_\pi(N)  \to H^2(N)$ is surjective (by the Borel spectral sequence) and the map $H^2(N, \bd N) \to H^2(N)$ is adjoint to the (diagonal) intersection form on $N$, with cokernel $H^2(\bd N) = H^2(\bd X_0)$, hence determined by the self-intersection numbers $\{\alpha_j\}$.
\item \label{obs:three} We have  $H^3_\pi(X_0, \bd X_0) \cong H^3(W_0, \bd W_0)\cong H_1(W_0) \cong \cy p \oplus H_1(X_0)$, where 
$ H_1(X_0)$ is a quotient of 
$\bigoplus \cy {\ba_j}$  and has order prime to $p$ (by Lemma \ref{lem:fivesix}).
\item \label{obs:four} The Mayer-Vietoris coboundary map $H^2_\pi(\bd X_0) \xrightarrow{\delta} H^3_\pi(X)$ also  factors as 
$$H^2_\pi(\bd X_0) \xrightarrow{\delta} H^3_\pi(N, \bd X_0) \cong H^3_\pi(X, X_0) \to H^3_\pi(X).$$
From the first three points  above, it follows that $\Image (\delta) = \cy p \subseteq H^3_\pi(X)$ may be identified with the first summand of the identification $H^3_\pi(X_0, \bd X_0) \cong  \cy p \oplus H_1(X_0)$.
\end{enumerate}

\medskip
To complete the proof of Theorem A, it is now enough to produce a class 
$$\theta_0 \in H^2_\pi(X_0)  \cong H^2(W_0),$$
which added
together with the classes already found in $H^2_\pi(N)$ will have image zero in $H^2_\pi(\bd X_0)$. By the observations above, this amounts to finding a $U(1)$-bundle $\bar L_0$ on $\bd W_0$ whose first Chern class $\theta_0 = c_1(\bar L_0)
$ has image of order prime to $p$ under the coboundary map
$$H^2(\bd W_0) \to  H^3(W_0, \bd W_0).$$
In other words, we need to find suitable $U(1)$-bundles over each of boundary components of $\bd W_0$, so that the sum of their first Chern classes is zero (mod $ p$). This required relation is exactly the condition \eqref{eq:thma} given in the statement of Theorem A.

\medskip
Let $Y$ denote one of the lens spaces in $\partial W_0$, For convenience, we will identify $\cy n \cong H^2(Y;\bZ)$ by $a \mapsto a\cdot\hat\mu$, for $a \in \cy n$, where $\hat\mu \in H^2(Y;\bZ)$ denotes a standard generator, Poincar\'e dual to the circle fibre class $\mu \in H_1(Y;\bZ)$ (introduced in Lemma \ref{lem:fivethree}).

If $Y_i=L(p;a_i,b_i) \subset \bd W_0$ arises from one of the isolated fixed points in $X$,  then choose the holonomy representation that sends a generator in $\pi_1(Y)$ to $\exp(2\pi i \lambda_i/p) $. The first Chern class of the associated  flat $U(1)$-bundle is 
$$\dfrac{\lambda_i}{a_ib_i}[\hat\mu_i] \in H^2(Y_i;\bZ) \cong \cy p.$$
In the case when the $\pi$-action on $X$ has only isolated fixed points, the condition for an extension is that these elements lie in the kernel of $\delta $ in $H^2(\partial W_0;\bZ)=\bigoplus_i H^2(Y_i;\bZ)$ which is equivalent to the condition  $\sum_i \dfrac{\lambda_i}{a_ib_i} \equiv 0 \pmod p$. 

In the general case  when components of the fixed set contain $2$-spheres, we need to consider contributions from the lens spaces $Y_j=L(p\alpha_j;c_j,c_j)$. These lens spaces arise from the free $t^{c_j}$-action on $\widetilde Y_j := \partial \nu(F_j) \iso L(\alpha_j;1)$. Consider the induced covering spaces of $Y_j$ by the lens spaces $L(p^{a_j+1},1)$ and 
$L(\ba_j,1)$, where $\alpha_j = p^{a_j}\ba_j$, and note that the covering maps induce an isomorphism:
\eqncount
\begin{equation}\label{eq:fiveseven}
\vcenter{\xymatrix@R-10pt{ H^2(Y_j; \bZ)  \ar[r]^(0.3)\cong\ar[d]^\cong &  H^2(L(p^{a_j+1},1)) \oplus H^2(L(\ba_j,1)) \ar[d]^\cong \\
   \cy{p{\alpha_j}}  \ar[r]^\cong &   \cy{p^{a_j+1}} \oplus \cy {\ba_j}
}}
\end{equation}
Under the two covering maps, the standard cohomology generator $\hat\mu_j \in H^2(Y_j;\bZ) $  is sent to the standard generators in $H^2(L(p^{a_j+1},1))$ and 
$ H^2(L(\ba_j,1))$,  respectively, by Lemma \ref{lem:fivefour}. The maps in the lower sequence are the reductions mod $p^{a_j+1}$ and $\ba_j$, after using the identifications provided by the cohomology generators.

The required Chern class  $c_1(\bar L_j)=\dfrac{\ell_j}{c_j^{2}}[\hat\mu_j]\in H^2(Y_j;\bZ)$ for each component $Y_j$ can now be determined uniquely by solving the congruences:
\eqncount
\begin{equation}\label{eq:fivenine}
\vcenter{\xymatrix@R-28pt@C-20pt{
\ell_j& \equiv& -\lambda_j\alpha_j \pmod{p^{a_j + 1}} ,\\
\ell_j& \equiv& c_jm_j \pmod{\ba_j}.}} 
\end{equation}
and hence we have a $U(1)$-bundle $\bar L_j \searrow Y_j$.
The minus sign is chosen in the first congruence because the induced orientation on $Y_j$ from $\bd W_0$ is opposite to its orientation as the disk bundle over $S^2$ with Euler class $\alpha_j$. 

By diagram \eqref{eq:fiveseven} and Lemma \ref{lem:fivefour},   the  first Chern class has image
$$c_1(\bar L_j) = \dfrac{\ell_j}{c_j^{2}}[\hat\mu_j] =
  \dfrac{-\lambda_j \alpha_j +
c_jm_j}{c_j^2}[\hat\mu_j] \in H^2(Y_j;\bZ)$$
with respect to the decomposition $H^2(Y_j;\bZ) \cong \cy{p^{a_j+1} }\oplus\bZ/\ba_j $. After substituting these expressions into the congruence relation \eqref{eqn:congrence}, we see that 
the sum vanishes mod $p$ (from Observation \ref{obs:four} above). Hence we have
\begin{equation*}
\delta ( \sum_i \dfrac{\lambda_i}{a_ib_i} [\hat\mu_i] + \sum_j \dfrac{\ell_j}{c_j^2} [\hat\mu_j]  ) = 0
\end{equation*}
under the coboundary map $\delta \colon H^2_\pi(\bd X_0) \to H^3_\pi(X)$, and
the required line bundle $\bar L_0$ over $W_0$ exists.
\end{proof}

\section{Equivariant $SU(2)$ Bundles}\label{sec:six}

In this section we compute a (necessary) congruence relation similar to the previous section,  but for equivariant $SU(2)$-bundles. As above, we work over a closed, simply connected, oriented $4$-manifold with a finite homologically trivial cyclic group action.  We again use the twisted $G$-signature formula (with the previously established notation).  In particular,  let $D$ denote the signature operator twisted by an  equivariant $SU(2)$-bundle $E \longrightarrow X$, then the contribution to the Lefschetz numbers from isolated fixed points is given by
\begin{align*}
L(g,D)\vv_{pt}=\frac{(t^{a}+1)}{(t^{a}-1)} \dfrac{(t^{b}+1)}{(t^{b}-1)}(t^{\lambda}+t^{-\lambda}).
\end{align*}
We need to compute the contribution from isolated fixed $2$-spheres $F$. Since $E\vv_F=L\oplus L^{-1}$, we have $\ch_g(L\oplus L^{-1}\vv_F)=\{ e^{\lambda+z}+e^{-\lambda-z} \}[F]$ and
\begin{align*}
L(g,D)\vv_F&=\{ 2\cot(i\theta/2)-\csch^2(i\theta/2)y \}\ch_g(L\oplus L^{-1}\vv_F)[F]\\
&=\{ 2\frac{(t^c+1)}{(t^c-1)}-\frac{4t^c y}{(t^c-1)^2} \} \{ e^{\lambda}(1+z)+e^{-\lambda}(1-z) \}[F]\\
&=\{ 2\frac{(t^c+1)}{(t^c-1)}-\frac{4t^c y}{(t^c-1)^2} \} \{ t^{\lambda}+t^{-\lambda}+z(t^{\lambda}-t^{-\lambda}) \}[F] \\
&=-\frac{4t^c[F]^2}{(t^c-1)^2}(t^{\lambda}+t^{-\lambda}) + 2c_1(L)[F]\frac{(t^c+1)}{(t^c-1)} (t^{\lambda}-t^{-\lambda}).             
\end{align*}
Also note 
\begin{align*}
\chi(1)&=\ch(E)\cL(X)[X]=(2-c_2(E))(4+\frac{1}{3}p_1)\\
&=2\Sign(X)-4c_2(E).
\end{align*}
We now again multiply both sides of the $G$-signature formula by $(t-1)$, take Taylor expansions about $t=1$ and reduce coefficients modulo $p$:
\begin{align*}
\dfrac{(t^{a}+1)}{(t^{a}-1)} \dfrac{(t^{b}+1)}{(t^{b}-1)}(t-1)^2(t^{\lambda}+t^{-\lambda})&=\frac{8}{ab}+\frac{8}{ab}(t-1)+\\
&\frac{2}{3}\frac{(a^2+b^2+1+6\lambda^2)}{ab}(t-1)^2 +\cdots.
\end{align*}
and for the second term, where we let $m$ denote $c_1(L)[F]$:
\begin{align*}
&(t-1)^2\{ \dfrac{-4\alpha t^c}{(t^{c}-1)^2}(t^{\lambda}+t^{-\lambda})+2m\dfrac{(t^c+1)}{(t^{c}-1)}(t^{\lambda}+t^{-\lambda}) \} \\
&=\frac{-8\alpha}{c^2}+\frac{-8\alpha}{c^2}(t-1)+\frac{2}{3}\frac{(\alpha c^2-\alpha-6\alpha \lambda^2+12m c \lambda)}{c^2}(t-1)^2+\cdots
\end{align*}
Summing over all the fixed sets and simplifying the coefficient of second order term $(t-1)^2$, we obtain: 
\begin{align*}
2\Sign(X)+\sum_i \frac{4 \lambda_i^2}{a_ib_i} -\sum_j \frac{4 \alpha_j \lambda_j^2}{c_j^2}+\sum_j \frac{ 8m_j \lambda_j}{c_j}.
\end{align*}
Equating this with $\chi(1)=2\Sign(X)-4c_2(E)$ and reducing coefficients modulo $p$ gives the following congruence relation:

\begin{theorem}\label{thm:sevenone}
Let $(E,\pi)\rightarrow (X,\pi)$ denote an equivariant $SU(2)$-bundle over a simply connected, closed $4$-manifold with a homologically trivial group action of a finite cyclic group $\pi=\bZ/p$ of odd prime order. Then the following congruence relation holds
\begin{align*}
& \sum_i \frac{\lambda_i^2}{a_ib_i} - \sum_j \frac{\alpha_j\lambda_j^2}{c_j^2}+\sum_j \frac{2\lambda_j}{c_j}c_1(i^{\ast}L_j)[F_j] \equiv -c_2(E)[X] \pmod p, 
\end{align*}
where $L_j$ is a local reduction $E\vv_{F_j}=L_j \oplus L_j^{-1}$.
\end{theorem}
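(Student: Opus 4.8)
The plan is to read the congruence directly off the twisted $G$-signature formula, exactly in the spirit of the line-bundle computation behind Theorem~\ref{thm:fiveone}, but carrying through the rank-two character produced by an $SU(2)$-bundle. First I would record that the equivariant index $\chi(t)$ of the signature operator twisted by $E$ is an honest virtual character of $\pi=\bZ/p$, so that $\chi(t)=\sum_{i=0}^{p-1}a_i t^i$ with $a_i\in\bZ$; by the twisted $G$-signature formula this character is the sum over fixed components of the local Lefschetz contributions already displayed, i.e. the isolated-point terms $\frac{(t^{a_i}+1)}{(t^{a_i}-1)}\frac{(t^{b_i}+1)}{(t^{b_i}-1)}(t^{\lambda_i}+t^{-\lambda_i})$ together with the fixed-sphere terms $-\frac{4\alpha_j t^{c_j}}{(t^{c_j}-1)^2}(t^{\lambda_j}+t^{-\lambda_j})+2m_j\frac{(t^{c_j}+1)}{(t^{c_j}-1)}(t^{\lambda_j}-t^{-\lambda_j})$, where $m_j=c_1(i^{\ast}L_j)[F_j]$. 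The characters $t^{\pm\lambda}$ appearing at each component are forced by the fact that a $\pi$-representation on an $SU(2)$-fibre has the form $t^{\lambda}\oplus t^{-\lambda}$, which is precisely the local reduction $E\vv_{F_j}=L_j\oplus L_j^{-1}$.

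The second ingredient is the value at the identity. By the Atiyah--Singer index theorem $\chi(1)=\ch(E)\cL(X)[X]$, and since an $SU(2)$-bundle has $c_1(E)=0$ we have $\ch(E)=2-c_2(E)$, so that $\chi(1)=(2-c_2(E))(4+p_1/3)[X]=2\Sign(X)-4c_2(E)$, using $\tfrac13 p_1[X]=\Sign(X)$. This is the right-hand side that the order-two coefficient must reproduce.

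Next I would run the coefficient-matching mechanism of Section~\ref{sec:three}. After multiplying $\chi(t)$ by $(t-1)^2$ and lifting to $\bZ[t]$, the indeterminacy is a multiple of $\Phi_p(t)$, whose Taylor coefficients about $t=1$ are divisible by $p$ up to order $p-1$; hence the Taylor coefficients of $\chi(t)(t-1)^2$ about $t=1$ in that range are well-defined modulo $p$. Since $\chi(t)(t-1)^2=\chi(1)(t-1)^2+O((t-1)^3)$, its coefficient of $(t-1)^2$ equals exactly $\chi(1)$. I would then compute that same coefficient by summing the local Taylor expansions recorded above: the $\lambda$-free part is $\sum_i\frac{2}{3}\frac{a_i^2+b_i^2+1}{a_ib_i}+\sum_j\frac{2}{3}\frac{\alpha_j(c_j^2-1)}{c_j^2}$, and the $\lambda$-dependent part is $\sum_i\frac{4\lambda_i^2}{a_ib_i}-\sum_j\frac{4\alpha_j\lambda_j^2}{c_j^2}+\sum_j\frac{8 m_j\lambda_j}{c_j}$. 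Feeding the untwisted relations (i) and (ii) of Section~\ref{sec:three} into the first sum collapses it to $2\Sign(X)$ modulo $p$, the factor two being the rank of $E$.

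Equating the two expressions for the order-two coefficient, the $2\Sign(X)$ terms cancel against the $2\Sign(X)$ inside $\chi(1)$, leaving $\sum_i\frac{4\lambda_i^2}{a_ib_i}-\sum_j\frac{4\alpha_j\lambda_j^2}{c_j^2}+\sum_j\frac{8 m_j\lambda_j}{c_j}\equiv -4c_2(E)\pmod p$; dividing by $4$, a unit modulo the odd prime $p$, gives the stated congruence. The main obstacle is the bookkeeping in the penultimate step: one must verify that the $\lambda$-free part of the order-two coefficient genuinely reduces to $2\Sign(X)$, which is not automatic and requires invoking both untwisted relations (i) and (ii). A secondary care point is the sign $t^{\lambda_j}-t^{-\lambda_j}$ in the Chern-class term: its leading behaviour $2\lambda_j(t-1)$ is exactly what, against the $\frac{4m_j}{c_j}(t-1)$ from $2m_j\frac{(t^{c_j}+1)}{(t^{c_j}-1)}(t-1)^2$, produces the cross term $8m_j\lambda_j/c_j$ at order two, so that the minus sign rather than a plus sign is essential for the relation to close up.
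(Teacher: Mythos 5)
Your proposal is correct and follows essentially the same route as the paper's Section \ref{sec:six}: compute the local Lefschetz contributions of the twisted signature operator, evaluate $\chi(1)=2\Sign(X)-4c_2(E)$, multiply by $(t-1)^2$, and match the order-two Taylor coefficient modulo $p$ (with indeterminacy controlled by $\Phi_p(t)$), then divide by the unit $4$. You even make explicit two points the paper glosses over: the collapse of the $\lambda$-free part to $2\Sign(X)$ via \emph{both} untwisted relations (i) and (ii) of Section \ref{sec:three}, and the sign $(t^{\lambda}-t^{-\lambda})$ in the Chern-class term, which the paper's expansion display misprints as $(t^{\lambda}+t^{-\lambda})$ although its stated coefficient $\tfrac{2}{3}\cdot\tfrac{12mc\lambda}{c^2}=\tfrac{8m\lambda}{c}$ confirms the minus sign you use.
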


Instanton gauge theory on $c_2(E)=1$ bundles in the equivariant setting provide many examples of equivariant $SU(2)$-bundles on smooth definite $4$-manifolds. We next check these relations for some examples of smooth cyclic group actions in the well-known linear models (see 
\cite{Austin:1990,Braam:1993,Furuta:1989,Furuta:1990,Furuta:1990a,Hambleton:1995,Hambleton:2004}).

\begin{example}[Linear Models on $S^4$]
Let $X=S^4$ with a linear $\cy p$-action which gives rotation numbers $(a, b)$ and $(a, -b)$. Let $E$ denote the instanton one equivariant $SU(2)$-bundle, i.e. with $c_2(E)=1$. Then the congruence relation is given by
\begin{align*}
-c_2(E)=\frac{\lambda_1^2}{ab}-\frac{\lambda_2^2}{ab} \pmod p
\end{align*}
It is elementary to check that this congruence relation is satisfied with the following isotropy representations
\begin{align*}
&\lambda_1=\frac{b-a}{2}\\
&\lambda_2=\frac{a+b}{2}.
\end{align*}
over the fibres of the fixed points.
\end{example}

\begin{example}[Linear Models on $\overline{\bC P}^2$]
Let $X=\overline{\bC P}^2$ with a linear $\cy p$-action with one isolated fixed point with rotation number $(a, -a)$ for some $a \pmod p$ and a fixed projective line $F$ with rotation number $a \pmod p$ on the normal bundle. Let $E$ again denote the instanton one equivariant $SU(2)$-bundle. The congruence relation gives
\begin{align*}
-1\equiv -\frac{\lambda^2}{a^2} + \frac{\lambda_F^2}{a^2}+\frac{2m\lambda_F}{a}
\end{align*}
where $m=c_1(i^{\ast}L)[F] $.
There exists two distinct lifts giving rise to equivariant bundles which admit $G$-invariant ASD connections. In the case when the equivariant lift comes from "bubbling" on the isolated fixed point then $m=0$ and 
\begin{align*}
&\lambda \equiv a \pmod p \\
&\lambda_F \equiv 0 \pmod p.
\end{align*}
Thus the congruence is satisfied. On the other hand, if we choose the equivariant lift associated to the fixed $2$-sphere (from 3-dimensional fixed connected component in the moduli space of equivariant ASD connections with $c_2(E)=1$) then $m=-1$ and 
\begin{align*}
&\lambda \equiv a/2 \pmod p\\
&\lambda_F \equiv a/2 \pmod p,
\end{align*}
again the congruence relation is satisfied.
\end{example}

\begin{remark} At present we do not have general sufficient conditions for the existence of equivariant principal $SU(2)$-bundles. However, in the special cases where $X$ is negative definite and $c_2(E) =1$, for any choice of reduction $E = L \oplus L^{-1}$ from a cohomology class $c_1(L) = \alpha \in H^2(X;\bZ)$ with $\la \alpha^2, [X] \ra = -1$, we have sufficient conditions by applying Theorem A.
\end{remark}

\section{Equivariant Index Computation}\label{sec:seven}
In this section we compute the dimension of the moduli space of invariant anti-self dual connections for a given equivariant $SU(2)$-bundle over smooth $4$-manifolds with a given homologically trivial cyclic group action. 

Let $X$ be a simply connected, closed, smooth negative definite $4$-manifold,   with a homologically trivial action of a finite group $G$.   If $E \searrow X$ is an $SU(2)$-bundle with $c_2(E) = k$,  the moduli space $\cM^\ast_1(X)$ of  irreducible ASD connections (on an $SU(2)$-bundle $E$ with $c_2(E) = 1$) inherits a $G$-action,  and the connected components of the fixed point set $\cM^G_1(X)$ correspond to $G$-invariant ASD connections for certain equivariant lifts of the $G$-action on $X$ to $E$ (see \cite{Furuta:1989}, 
 \cite{Braam:1993}, \cite[Theorem A]{Hambleton:1992},  \cite[\S 2]{Hambleton:2004}).

 We want to compute the dimension of the moduli space $\cM^G_k(X)$ of irreducible $G$-invariant ASD connections. This is motivated by Example \ref{ex:eightone}, for which the formal dimension $\dim \cM^\ast_1(X) = 5$. In this case, we expect a dimension formula that gives $1$ and $3$-dimensional strata depending on contributions from isolated fixed points or isolated fixed $2$-spheres in $X$ and on the isotropy representations from the equivariant lift (see \cite{Braam:1993} and \cite{Hambleton:1995} for details). There are similar index calculations in the literature in various gauge-theoretic settings (for example, see \cite[\S 3]{Fintushel:1985}, \cite{Austin:1990}), \cite{Lawson:1988,Lawson:1993}, \cite{Wang:1993},  \cite{Anvari:2016a}).

We first very briefly review the dimension calculation in the non-equivariant setting to set some notation. Let $D_A^+=d_A^{\ast}+d_A^{+}\colon \Omega^1(\ad E) \rightarrow \Omega^{0}(\ad E) \oplus \Omega^2_{+}(\ad E)$ denote the anti-self duality operator, and let $\cM_k$ denote the ASD moduli space with $c_2(E)=k$. Note that the formal dimension is given by $\dim \cM_k=- \Ind(D_A^+)$ and this is given by 
\eqncount
\begin{equation}
\Ind(D_A^+)=\hat{A}(X)\ch(S^+)\ch(\ad_{\bC} E) [X]
\end{equation}
where $S=S^+\oplus S^-$ and $\hat{A}(X)=\prod \frac{x_i/2}{\sinh(x_i/2)}$ with $\ch(S)=\prod (e^{x_i/2}+e^{-x_i/2})$ and $\ch(S^+)-\ch(S^-)=\prod (e^{x_i/2}-e^{-x_i/2})$. Using this we compute 
\begin{align*}
2\hat{A}(X)\ch(S^+)\ch(\ad_{\bC}E)[X]&=(4+\frac{1}{3}p_1+\chi)(3-4c_2(E))[X]\\
&=-16c_2(E)+3(\frac{p_1}{3}+\chi).
\end{align*}
Thus the index $\Ind(D_A^+)=-8c_2(E)+\frac{3}{2}(\Sign+\chi)(X)$ and we get the usual expression $\dim \cM_k=8k-3/2(\chi+\Sign)(X)$ for the dimension of the moduli space.
Also note the following alternative expression for the index:
\begin{align*}
\Ind(D_A^+)&=\frac{\ch(S^+-S^-)\ch(S^+)\ch(\ad_{\bC}E)Td(TX\otimes \bC)}{e(X)}[X]\\
&=\hat{A}(X)\ch(S^+\otimes \ad_{\bC}E)[X].
\end{align*} 
For the equivariant setting $E$ is an equivariant $SU(2)$-bundle and let $D=D_A^+$ denote the anti-self duality operator $d_A^{\ast}+d_A^{+}: \Omega^1(\ad E)^{G} \rightarrow \Omega^{0}(\ad E)^{G} \oplus \Omega^2_{+}(\ad E)^{G}$.
We compute the equivariant index by averaging the Lefschetz numbers as in \cite{Fintushel:1985}:
%]eqncount
\begin{equation*}
\Ind(D)=\frac{1}{p}\sum_{g\in G}L(g,D)
\end{equation*}
\begin{align*}
\Ind(D)&=\frac{1}{p}\{ L(1,D) + \sum_{g \neq 1} L(g,D)\}\\
&=\frac{1}{p}\{-8c_2(E)+\frac{3}{2}(\chi+\Sign)(X) + \sum_{g \neq 1} L(g,D)\}\\
&=\frac{1}{p}\{-8c_2(E)+\frac{3p}{2}(\chi+\Sign)(X/G)-\frac{3}{2}(d_{\chi}+d_{\sigma})(X^{G})+\sum_{g \neq 1} L(g,D)\}
\end{align*}
where $p\chi(X/G)=\chi(X)+d_{\chi}$ with $d_{\chi}=\sum_{g\neq 1}\chi(X^{g})$ is the Euler characteristic defect terms and similarly for the signature defect term: 
\begin{align*}
&-\frac{3}{2}(d_{\chi}+d_{\sigma})[pt]=-\frac{3}{2}(1-\cot(\theta_1/2)\cot(\theta_2/2))\\
&-\frac{3}{2}(d_{\chi}+d_{\sigma})[F]=-\frac{3}{2}(2+[F]^2\csc^2(\theta/2)),
\end{align*}
where $(\theta_1, \theta_2)$ are the rotation numbers at an isolated fixed point and $\theta=c_F$ is the rotation number on the normal bundle to $F$. Decomposing the contributions from isolated fixed points and $2$-spheres:
\begin{align*}
\sum_{g\neq 1}L(g,D)(X^G)=\sum_{g\neq 1}\{ \sum_{i} L(g,D)\vv_{(a_i,b_i)}+ \sum_{j} L(g,D)\vv_{F_j}\}.
\end{align*}
Now $\ch_g(\ad_{\bC}E)(pt)=3-4\sin^2(\frac{\pi k \ell}{p})$, with $\ell$ the isotropy representation on the fiber of $E$ over the fixed point. The Lefshetz numbers from the fixed sets can be computed directly from the index formula and are given by:
\eqncount
\begin{equation}
L(g,D)\vv_{pt}=\frac{-1}{2}[\cot(\theta_1/2)\cot(\theta_2/2)-1]\ch_g(\ad_{\bC}E) [pt]
\end{equation}
\eqncount
\begin{equation}
L(g,D)\vv_{F}=[-i\cot(\theta/2)+\frac{1}{2}(\chi+\csc^2(\theta/2)y)]\ch_g(\ad_{\bC}E) [F],
\end{equation}
with $\chi$ the Euler class of the tangent bundle to $F$ and $y$ is the Euler class of the normal bundle to $F$. We first compute the contribution from isolated fixed points.
\begin{align*}
L(g,D)\vv_{pt}&=\frac{-1}{2}[\cot(\theta_1/2)\cot(\theta_2/2)-1][3-4\sin^2(\frac{\pi k \ell}{p})] [pt]\\
&=-\frac{3}{2}[\cot(\theta_1/2)\cot(\theta_2/2)-1]-2\sin^2(\frac{\pi k \ell}{p})\\
&+2\cot(\theta_1/2)\cot(\theta_2/2)\sin^2(\frac{\pi k \ell}{p}).
\end{align*}
Summing over all isolated fixed points gives 
\begin{align*}
\frac{1}{p}\sum_{g\neq 1} \sum_{i} L(g,D)\vv_{(a_i,b_i)}&=\frac{3}{2p}\sum_i (d_{\chi}+d_{\sigma})(a_i,b_i)-\frac{2}{p}\sum_i \sum_{k=1}^{p-1} \sin(\frac{\pi k \ell_i}{p})\\
&+\frac{2}{p}\sum_i \sum_{k=1}^{p-1} \cot(\frac{a_i \pi k}{p})\cot(\frac{b_i \pi k}{p})\sin^2(\frac{\pi k \ell_i}{p})\\
&=\frac{3}{2p}\sum_i (d_{\chi}+d_{\sigma})(a_i,b_i) + m + \sum_i \rho L(p,a_i,b_i,\ell_i)
\end{align*}
where $m$ is the number isolated fixed points with non-trivial representation on the fiber and $\rho L(p,a,b,\ell)$ is the rho invariant of lens spaces.

We need to compute  $\ch_g(\ad_{\bC}E\vv_{F})$. Since an $SU(2)$ bundle restricted over a fixed $2$-submanifold has a local abelian reduction $E\vv_{F}=L\oplus L^{-1}$ for some $L$, we have $\ad E\vv_{F}= L^2\oplus \underline{\bR}$. We need to compute $\ch_g(\ad_{\bC}E\vv_{F})=\ch_g(L^2)+\ch_g(\overline{L^2})+1$ and this contributes
\begin{align*}
\ch_g(\ad_{\bC}E\vv_{F})&=(g+gc_1(L^2))+(g^{-1}+g^{-1}c_1(\overline{L^2}))+1\\
&=(g+g^{-1}+1)+c_1(L^2)(g-g^{-1})\\
&=(3-4\sin^2(\frac{\pi k \ell}{p}))+2ic_1(L^2)\sin(\frac{2\pi k \ell}{p}), 
\end{align*}
where now $\ell$ is the isotropy representation on the fibre over the fixed $2$-sphere $F$. Substituting these terms, the Lesfchetz number $L(g,D)\vv_{F}$ evaluated on fixed $2$-spheres gives:
\begin{align*}
L(g,D)\vv_{F} &=[-i\cot(\theta/2)+\frac{1}{2}(\chi+\csc^2(\theta/2)y)][(3-4\sin^2(\frac{\pi k \ell}{p}))+2ic_1(L^2)\sin(\frac{2\pi k \ell}{p})][F]\\
&=\frac{1}{2}[\chi+\csc^2(\theta/2)y][3-4\sin^2(\frac{\pi k \ell}{p})]+2c_1(L^2)\sin(\frac{2\pi k \ell}{p})\cot(\frac{\theta}{2}).
\end{align*}
Let us introduce a kind of rho invariant term for fixed surfaces: 
\begin{align*}
\rho_F(\ell)=\frac{2}{p}\sum_{k=1}^{p-1} \csc^2(\frac{\pi c_F k}{p})\sin^2(\frac{\pi k \ell}{p}) [F]^2-\frac{4c_1(L)[F]}{p}\sum_{k=1}^{p-1} \sin(\frac{2\pi k \ell}{p})\cot(\frac{\pi k c_F}{p}),
\end{align*}
with this notation we have 
\begin{align*}
\frac{1}{p}\sum_{g \neq 1}\sum_{j} L(g,D)\vv_{F_j}&=\frac{3}{2p}\sum_j(d_{\chi}+d_{\sigma})[F_j]-\frac{2}{p}\sum_j \chi(F_j)\sum_{k=1}^{p-1} \sin^2(\frac{\pi k \ell_j}{p})\\
&-\sum_j \rho_{F_j}(\ell_j).
\end{align*}
Now combining all the terms we obtain:
\begin{align*}
\Ind(D_A)&=\frac{-8}{p}c_2(E)+\frac{3}{2}(\chi+\Sign)(X/G))-m+\sum_{i}\rho L(p,a_i,b_i,\ell_i)\\
& -\sum_{j\: \text{with} \: \ell_j \neq 0} \chi(F_j)-\sum_j \rho_{F_j}(\ell_j).
\end{align*}
Since $\dim \cM^G_k(X)=-\Ind(D_A)$, the dimension formula is
\begin{align*}
\dim \cM^G_k(X)&=\frac{8}{p}c_2(E)-\frac{3}{2}(\chi+\Sign)(X/G)+m -\sum_{i}\rho L(p,a_i,b_i,\ell_i)\\
& +\sum_{j\: \text{with} \: \ell_j \neq 0} \chi(F_j)+\sum_j \rho_{F_j}(\ell_j).
\end{align*}
Before giving an example we note a few special cases. When the action on $X$ only has isolated fixed points, let $(a_i,b_i)$ denote the rotation numbers and $\ell_i$ the isotropy representation over the points, the formula reduces to the following:
\begin{align*}
\dim \cM^G_k(X)=\frac{8c_2(E)}{p}-\frac{3}{2}(\chi+\Sign)(X/G)+m -\sum_{i}\rho L(p,a_i,b_i,\ell_i).
\end{align*}
For invariant ASD connections on the four-sphere this formula reduces to that of \cite[p. 394]{Austin:1990}. In the case of $SO(3$)-bundles in the orbifold setting,  see Fintushel and Stern \cite{Fintushel:1985}. When the action on $X$ is a smooth involution with fixed $2$-sphere and non-trivial action on fibre $c_F \equiv \ell \equiv 1$ mod 2 the formula above reduces to:
\begin{align*}
\dim \cM^G_k(X)&=4c_2(E)-\frac{3}{2}(\chi+\Sign)(X/G)+\chi(F)+ [F]^2.
\end{align*}
which matches with Wang \cite[Theorem 18, p.~130]{Wang:1993}.
We finish this section with an example.

\begin{example}\label{ex:eightone}
Let $X= \#_3 \overline{\bC P}^2$ with a linear $\cy p$-action with $ p=5$ that arises from equivariant connected sums of linear actions in the following way. Take the equivariant connected sum of two copies of $\overline{\bC P}^2$ along the two dimensional fixed sets which fixes a projective line and a rotation number of $(1, -1)$ at the isolated fixed points in each copy. Now at one of the isolated fixed points take the equivariant connected sum with $\overline{\bC P}^2$ that has a linear action with $3$ isolated fixed points with rotation numbers $(1, 1), (2, -1), (2, -1)$. 

The result is a smooth, homologically trivial $\cy 5$-action on $X$ that has $3$ isolated fixed points with rotation data $\{(1, -1), (2, -1), (2, -1)\}$ and a single fixed $2$-sphere $F$ with rotation number $c_F \equiv 1 \pmod p$ on the normal bundle and has self intersection $-2$. 

The compactified, equivariant  ASD instanton one moduli space $\cM_1(X)$ has dimension 5 with fixed components that are $1$ and $3$-dimensional which correspond to invariant ASD connections for a lifted action to the $SU(2)$-bundle (see \cite{Hambleton:1995}). 

The boundary of the moduli space is the "bubbling" of highly concentrated ASD connections which can be identified with a copy of $X$. The isolated fixed points propagate  $1$-fixed dimensional strata into the moduli space. We will compute the dimension of these strata using the dimension formula from this section and from the fixed point data.  

For example, at the isolated fixed point $(2, -1)$ the highly concentrated instantons correspond to ASD connections on the $4$-sphere, with equivariant lifts matching the linear models which then pull back to $X$ using the degree $1$-map in the formation of the Taubes boundary. This determines the equivariant lift on $X$ and has isotropy representation $t^{\lambda_1}$ over the fixed point $(2, -1)$ with $\lambda_1 \equiv -3 \pmod p$ and $t^{\lambda_2}$ over all the other fixed point sets with $\lambda_2 \equiv 1 \pmod p$. The dimension formula gives:
\begin{align*}
\frac{8}{p}-\rho L(p, 2, -1, -3) - \rho L(p, 2, -1, 1) - \rho L(p, 1, -1, 1) +\chi(F)+ \rho_F(1) = 1.
\end{align*}
On the other hand, at a point on the fixed $2$-sphere $F$ following the same procedure with the degree one Taubes map, we can pull-back an equivariant bundle from the linear model on $S^4$ with a fixed embedded $2$-sphere. This time we get an equivariant $SU(2)$-bundle on $X$ with $c_1(L)[F]=-1$ in the local reduction $ E\vv_{F} = L \oplus L^{-1}$ (in this case actually a global reduction as it corresponds to reducible). The isotropy representation is $t^{\lambda}$ over all the fixed point sets with $\lambda \equiv 1 \pmod p$. We then have:
\begin{align*}
\frac{8}{p}-2\rho L(p, 2, -1, 1) - \rho L(p, 1, -1, 1) +\chi(F)+ \rho_F(1) = 3.
\end{align*}
after substituting the data  into the dimension formula.
\end{example}

%\bibliographystyle{amsplain}
%\bibliography{Bibliography}
%\end{document}

%    Insert the bibliography data here.
%%%%%%
\providecommand{\bysame}{\leavevmode\hbox to3em{\hrulefill}\thinspace}
\providecommand{\MR}{\relax\ifhmode\unskip\space\fi MR }
% \MRhref is called by the amsart/book/proc definition of \MR.
\providecommand{\MRhref}[2]{%
  \href{http://www.ams.org/mathscinet-getitem?mr=#1}{#2}
}
\providecommand{\href}[2]{#2}

\end{document}